\documentclass[11pt,a4paper]{amsart}
\usepackage{amsmath,amssymb,amsfonts, float}
\usepackage[all]{xy}
\usepackage{caption}
\usepackage{enumerate}
\usepackage{mathpazo}
\usepackage{a4wide}
\usepackage{verbatim}

\usepackage{bm}
\usepackage{graphicx}
\usepackage[breaklinks=true]{hyperref}

\usepackage{xcolor}
\usepackage{multicol}
\usepackage{tabularx}

\usepackage{hyperref}
\usepackage[capitalize]{cleveref}

\usepackage[normalem]{ulem}

\newtheorem{thm}{Theorem}[section] 
\newtheorem*{thm*}{Theorem} 
\newtheorem{prop}[thm]{Proposition}
\newtheorem{lem}[thm]{Lemma}
\newtheorem{cor}[thm]{Corollary}

\theoremstyle{definition}
\newtheorem{definition}[thm]{Definition}

\newtheorem{rem}[thm]{Remark}

\DeclareMathOperator{\R}{\mathbb{R}}

\DeclareMathOperator{\N}{\mathbb{N}}
\DeclareMathOperator{\F}{\mathbb{F}}

    \DeclareFontFamily{U}{wncy}{}
    \DeclareFontShape{U}{wncy}{m}{n}{<->wncyr10}{}
    \DeclareSymbolFont{mcy}{U}{wncy}{m}{n}
    \DeclareMathSymbol{\Sha}{\mathord}{mcy}{"58}

\numberwithin{equation}{section}

\newcommand{\lcm}{{\rm lcm}}

\DeclareSymbolFont{bbold}{U}{bbold}{m}{n}
\DeclareSymbolFontAlphabet{\mathbbold}{bbold}

\usepackage{hyperref}

\begin{document}
\title{Analytic Properties of Necklace Polynomials}
 \author{ Sunil K. Chebolu,  J\'an Min\'a\v{c}, Tung T. Nguyen, Nguy$\tilde{\text{\^{e}}}$n Duy T\^{a}n }

\address{
Department of Mathematics, Illinois State University, Normal, Illinois 61790, USA}
\email{schebol@ilstu.edu}

\address{Department of Mathematics, Western University, London, Ontario, Canada N6A 5B7}
\email{minac@uwo.ca}

 \address{Department of Mathematics, Elmhurst University, Elmhurst, Illinois, USA}
 \email{tung.nguyen@elmhurst.edu}
 
  \address{
Faculty of Mathematics and Informatics, Hanoi University of Science and Technology, 1 Dai Co Viet Road, Hanoi, Vietnam } 
\email{tan.nguyenduy@hust.edu.vn}
 
\keywords{Necklace polynomials, Necklaces, Polynomials over finite fields, Monotonicity.}
\subjclass[2000]{Primary 26C05, 11B83, 12E10 }

\maketitle

\begin{abstract}

The necklace polynomials
\[
M_n(x)=\frac1n\sum_{d\mid n}\mu(d)x^{n/d}
\]
play a central role in discrete mathematics: they count aperiodic necklaces, enumerate monic irreducible polynomials over finite fields, and give the dimensions of homogeneous components of free Lie algebras. Despite their inherently discrete origins, we show that treating $M_n(x)$ as a function of a real variable $x$ unlocks surprising structural properties that answer natural enumerative questions. In this paper, we study $M_n(x)$ as a real-variable function and establish several new analytical and monotonicity properties.
We prove that the normalized functions $M_n(x)/x^n$ and their higher normalized derivatives are strictly increasing on $[1,\infty)$. As a consequence, we show that the proportion of irreducible polynomials of fixed degree over $\mathbf F_q$ increases with $q$. We also establish strict growth with respect to the degree $n$ for $x\ge2$. In addition, we determine a sharp threshold for log-convexity: the sequence $\{M_n(x)\}_{n\ge2}$ is uniformly log-convex if and only if $x>8$.
These results reveal unexpected analytic structure underlying necklace polynomials and show how real-variable methods can yield new information about discrete enumeration problems. For instance, it is shown  that 
adding one more bead to a sufficiently long necklace will approximately increase the total number of primitive, rotationally distinct configurations by a factor of the number of available colors.
\end{abstract}

\section{Introduction}

The necklace polynomial, defined for a positive integer $n$ as
\[ M_n(x) = \frac{1}{n} \sum_{d \mid n} \mu(d) x^{\frac{n}{d}},\]
where $\mu$ is the M\"obius function, is a mathematical chameleon. To a combinatorialist, it enumerates the distinct aperiodic necklaces of $n$ beads chosen from $x$ available colors, considered up to rotational symmetry \cite{moreau}. To an algebraist, it describes the dimensions of the homogeneous components of free Lie and Jordan algebras \cite{metropolis1983witt}. To a number theorist, it counts the monic irreducible polynomials of degree $n$ over a finite field of $x$ elements \cite{chebolucounting, gauss2006untersuchungen}.

Given its ubiquity across these domains, the arithmetic and algebraic properties of $M_n(x)$ have been extensively studied \cite{hyde2022cyclotomic,Dynatomic2022}. However, a surprising gap remains: what happens when we step away from discrete parameters and examine the \textit{analytical} properties of $M_n(x)$ as a function of a real variable $x$? In this article, we investigate the monotonic properties of these real-valued polynomials.

Our foray into this continuous perspective was not coincidental. It emerged naturally from our prior work on counting irreducible polynomials using the inclusion-exclusion principle (see \cite{chebolucounting}), and it reemerged during our recent investigations into isomorphic gcd-graphs defined over $\F_q[x]$ (see \cite{nguyen_isomorphic_gcd_graph}). As we tracked the behavior of irreducible polynomials across varying field sizes and degrees in \cite{chebolucounting}, two deceptively simple questions arose, and our main theorems answer both these questions affirmatively:

\begin{enumerate}
    \item For a fixed degree $n$, does the number of irreducible polynomials strictly increase as the field size $q$ grows?
    \item For a fixed field size $q$, does the number of irreducible polynomials strictly increase as the degree $n$ grows?
\end{enumerate}

At first glance, both answers intuitively feel affirmative. Visualizing the sequence of functions $M_n(x)$ on the interval $[1,5]$ (see \cref{fig:Mn}) strongly suggests that $M_n(x)$ is increasing with respect to $x$. In this paper, we confirm this intuition by proving a much stronger analytical property.

\begin{thm}
For any integer $n \ge 2$, the normalized function $\frac{M_n(x)}{x^n}$ is  increasing on $[1, \infty)$. 
More generally, let $p$ be the smallest prime divisor of $n$ and $k$ a positive integer. Let $M_n^{(k)}(x)$ be the $k$-th derivative of $M_n(x)$. Then, the following properties hold 
\begin{enumerate}
\item If $n/p < k \leq n$, then $M_n^{(k)}(x)/x^{n-k}$ is a constant. 
\item Otherwise, if $0 \leq k \leq n/p$, $M_n^{(k)}(x)/x^{n-k}$ is an increasing function on $[1, \infty).$
\end{enumerate}
\end{thm}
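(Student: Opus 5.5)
Write $M_n(x)=\frac1n\sum_{d\mid n}\mu(d)x^{n/d}$. Differentiating $k$ times gives
\[
\frac{M_n^{(k)}(x)}{x^{n-k}}=\frac1n\sum_{d\mid n}\mu(d)\,(n/d)_k\,x^{n/d-n},
\]
where $(m)_k=m(m-1)\cdots(m-k+1)$ is the falling factorial, with $(m)_k=0$ when $m<k$. The case $k=0$ is the first assertion of the theorem, so it suffices to treat general $k\ge 0$.

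\textbf{Part (1).} The term $d=1$ contributes the constant $(n)_k/n$. Every other divisor $d>1$ satisfies $d\ge p$, so $n/d\le n/p<k$ and the corresponding term vanishes. Hence the quotient is constant.

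\textbf{Part (2), setup.} Now let $0\le k\le n/p$. Put $f(x)=n\,M_n^{(k)}(x)/x^{n-k}$ and differentiate:
\[
x\,f'(x)=\sum_{d\mid n,\ d>1}\mu(d)\,(n/d)_k\,\Bigl(\frac nd-n\Bigr)x^{n/d-n}
=\sum_{d\mid n,\ d>1}(-\mu(d))\,(n/d)_k\,\Bigl(n-\frac nd\Bigr)x^{n/d-n}.
\]
To show $f$ is increasing it suffices to show this sum is positive for $x>1$, which gives strict monotonicity on $[1,\infty)$. The dominant term is $d=p$. Its coefficient $-\mu(p)=1$ is positive, its factor $(n/p)_k\ge 1$ is nonzero because $k\le n/p$, and its power $x^{n/p-n}$ is the largest of all powers occurring.

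\textbf{Part (2), bounding the rest.} I would bound the remaining terms in absolute value by
\[
\sum_{d\mid n,\ d>p}(n/d)_k\,n\,x^{n/d-n},
\]
and then divide the whole inequality by $x^{n/p-n}$. Every other divisor $d>p$ satisfies $n/d\le n/p-1$; more sharply, $n/d\le n/(p+1)$, so $n/d\le n/p - n/(p(p+1))$. This gives a gap of at least $1$ in the exponent, but a gap of $1$ alone is weak when $x$ is close to $1$. This is the main obstacle: near $x=1$ the exponential decay of the smaller terms disappears.

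\textbf{Handling $x$ near $1$.} Here I would use the falling factorials instead. We have $(n/d)_k\le (n/p)_k\,(\tfrac{n/d}{n/p})^k$, and $(n/d)_k=0$ unless $n/d\ge k$. A cleaner route is probably to evaluate the sum at $x=1$ and show it is nonnegative. At $x=1$ the sum equals
\[
\sum_{d\mid n,\ d>1}(-\mu(d))(n/d)_k(n-n/d)=-\Bigl[(n)_k\cdot 0+\sum_{d\mid n}\mu(d)(n/d)_k(n-n/d)\Bigr],
\]
which can be analysed through M\"obius sums $\sum_{d\mid n}\mu(d)P(n/d)$ for polynomials $P$ of degree at most $k+1\le n/p+1$.

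\textbf{Fallback: pairing.} If this becomes messy, I would fall back on pairing squarefree divisors $d$ and $dp$ with $p\nmid d$. The term for $dp$ has the opposite sign and smaller size, because both $(m)_k$ and $(n-m)x^{m-n}$ are increasing in $m=n/d$ for $x\ge1$, at least when $m\le n/2$. Summing over $d$ coprime to $p$, each pair gives $-\mu(d)[g(n/d)-g(n/(dp))]$, where $g(m)=(m)_k(n-m)x^{m-n}$. Then the whole expression becomes $\sum_{e\mid n'}\bigl(-\mu(e)\bigr)h(n/e)$ restricted appropriately, and the argument can be iterated over the primes of $n$, giving an inclusion–exclusion that is positive by the monotonicity of $g$.

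I expect verifying the monotonicity and convexity-type properties of $g$ needed for this iteration, uniformly in $x\ge1$ and $k\le n/p$, to be the crux.
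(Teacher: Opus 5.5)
Your Part (1) is correct and is the paper's argument. Your reduction of Part (2) to
\[
S(x)=\sum_{d\mid n,\ d>1}(-\mu(d))\,(n/d)_k\,\bigl(n-\tfrac{n}{d}\bigr)\,x^{n/d-n}>0\qquad(x>1)
\]
is also fine. But this inequality is the entire content of Part (2), and none of your three routes establishes it.

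\begin{itemize}
\item The dominant-term bound fails near $x=1$, as you say. For $n=30$, $k=0$, $x=1$ the terms are $15,20,24,-25,-27,-28,29$, so the $d=p$ term does not dominate.
\item Proving $S(1)\ge0$ controls a single point and says nothing about $x>1$.
\item The pairing step rests on a false claim: $(n-m)x^{m-n}$ is not increasing in $m$ near $x=1$. Its $m$-derivative is $x^{m-n}\bigl((n-m)\log x-1\bigr)$, and at $x=1$ the function is just $n-m$. So for $k=0$ and $x$ near $1$ the $dp$-term is the larger one.
\end{itemize}
More fundamentally, monotonicity of $g$ alone cannot make the iterated inclusion--exclusion positive. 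For the nondecreasing $g=\mathbf{1}_{[t,\infty)}$ one gets $\sum_{d\mid n,\,d>1}(-\mu(d))g(n/d)=-\sum_{d\mid n,\,1<d\le n/t}\mu(d)$. This equals $-2$ for $n=11\cdot13\cdot17\cdot19$ and $323\le n/t<2431$. So the ``convexity-type properties'' you postpone are not a detail; they are the missing proof.

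The idea you are missing is a structural identity that makes positivity manifest all the way down to $x=1$. Applying $x^k\frac{d^k}{dx^k}$ to $M_n(xy)=\sum_{\lcm(i,j)=n}\gcd(i,j)M_i(x)M_j(y)$ gives, with $M_n^{[k]}=x^kM_n^{(k)}$,
\[
M_n^{[k]}(\alpha x)=\sum_{\lcm(i,j)=n}\gcd(i,j)\,M_i^{[k]}(x)\,M_j(\alpha).
\]
For $x>1$ and $\alpha>1$:
\begin{itemize}
\item every term is $\ge0$, since $M_j(\alpha)>0$ and $M_i^{(k)}\ge0$ on $[1,\infty)$ (for $k\ge2$ this follows from $\sum_{d\ge1}d^{-k}\le\zeta(2)<2$);
\item the terms with $i=n$ sum to $\alpha^nM_n^{[k]}(x)$, because $\sum_{j\mid n}jM_j(\alpha)=\alpha^n$;
\item the extra term $(i,j)=(n/p,n)$ is strictly positive exactly when $k\le n/p$.
\end{itemize}
Hence $M_n^{[k]}(\alpha x)>\alpha^nM_n^{[k]}(x)$, i.e.\ $M_n^{(k)}(x)/x^{n-k}$ is increasing.

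Alternatively, your evaluation at $x=1$ can be upgraded. With $x=e^t$,
\[
x^nS(x)=\sum_{r\ge0}\frac{t^r}{r!}\sum_{m\mid n}\mu(n/m)\,(m)_k\,(m-n)\,m^r ,
\]
and $S(1)$ is just the $r=0$ coefficient. You need every coefficient positive. For $k=0$ this is $J_{r+1}(n)-nJ_r(n)=n^{r+1}\bigl[\prod_{q\mid n}(1-q^{-r-1})-\prod_{q\mid n}(1-q^{-r})\bigr]>0$, with $J_r$ the Jordan totient; this is the paper's second proof.
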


Turning to the second question, numerical evidence  indicates that, barring a few edge cases for extremely small parameters, $M_n(q)$ is indeed an increasing sequence with respect to $n$.

\begin{thm}

 Let $x \geq 2$ be a real number. Then $M_{n+1}(x)>M_n(x)$ unless $n=1$ and $2 \le x \le 3$
\end{thm}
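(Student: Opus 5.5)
We compare $nM_n(x)$ and $(n+1)M_{n+1}(x)$ through their leading terms and bound the Möbius tails crudely. The small values of $n$ where the crude bounds are too weak are handled directly.

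\textbf{Tail bound.} From the definition,
\[
nM_n(x)=x^n+\sum_{d\mid n,\ d>1}\mu(d)x^{n/d},
\]
and every exponent $n/d$ with $d>1$ is at most $\lfloor n/2\rfloor$. For $x\ge 2$ this gives
\[
\Bigl|nM_n(x)-x^n\Bigr|\le \sum_{j=1}^{\lfloor n/2\rfloor}x^j=\frac{x^{\lfloor n/2\rfloor+1}-x}{x-1}\le 2x^{\lfloor n/2\rfloor},
\]
using $x/(x-1)\le 2$. Hence
\[
M_{n+1}(x)\ge \frac{x^{n+1}-2x^{\lfloor (n+1)/2\rfloor}}{n+1},\qquad M_n(x)\le \frac{x^n+2x^{\lfloor n/2\rfloor}}{n}.
\]

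\textbf{Reduction to an inequality.} It therefore suffices to prove
\[
n x^{n+1}-(n+1)x^n > 2n\,x^{\lfloor (n+1)/2\rfloor}+2(n+1)x^{\lfloor n/2\rfloor}.
\]
The left side equals $x^n(nx-n-1)$, and since $x\ge2$ we have $nx-n-1\ge n-1$. The right side is at most $(4n+2)x^{(n+1)/2}$. So it is enough to have
\[
(n-1)x^{(n-1)/2}>4n+2.
\]
With $x\ge2$ this holds once $(n-1)2^{(n-1)/2}>4n+2$, which is true for all $n\ge 7$; for instance $n=7$ gives $6\cdot 8=48>30$. Larger $x$ only helps.

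\textbf{Small cases $n\le 6$.} Here we compute the polynomials explicitly:
\[
M_1=x,\quad M_2=\tfrac{x^2-x}{2},\quad M_3=\tfrac{x^3-x}{3},\quad M_4=\tfrac{x^4-x^2}{4},\quad M_5=\tfrac{x^5-x}{5},\quad M_6=\tfrac{x^6-x^3-x^2+x}{6},\quad M_7=\tfrac{x^7-x}{7}.
\]

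For $n=1$, $M_2(x)>M_1(x)$ is equivalent to $x^2-3x>0$, that is $x>3$. This is exactly the exceptional range $2\le x\le 3$ in the statement.

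For $2\le n\le 6$, each difference $M_{n+1}-M_n$, multiplied by $n(n+1)$, is a polynomial we must show is positive on $[2,\infty)$. For example, for $n=2$ we need $2(x^3-x)-3(x^2-x)=x(2x^2-3x+1)=x(2x-1)(x-1)>0$, which holds for $x\ge 2$. For the remaining $n$ we substitute $x=2+t$ with $t\ge0$ and check that the coefficients are positive. Alternatively, we can sharpen the tail bound using that $n/d\le n/p$ for the smallest prime $p\mid n$, which shrinks the number of small cases.

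The main obstacle is making the generic estimate tight enough near $x=2$ that only a handful of $n$ need explicit checking. The crude geometric-series bound above should already suffice from $n=7$ on, and the polynomial checks for $n\le 6$ are routine.
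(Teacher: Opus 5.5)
Your argument is correct and is essentially the paper's proof. Both play a geometric-series lower bound for $M_{n+1}(x)$ against an upper bound for $M_n(x)$, then check the small $n$ by explicit polynomials; your claimed positive coefficients after substituting $x=2+t$ do hold for $n=3,\dots,6$.

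The one difference is the upper bound on $M_n(x)$. The paper uses the sharper bound $M_n(x)\le x^n/n$, obtained from $x^n=\sum_{d\mid n} dM_d(x)$ and positivity of the $M_d$. This reduces the generic condition to $x^{\lfloor n/2\rfloor}\ge 2n/(n-1)$, so only $n=2,3$ need checking by hand. Your two-sided tail bound is self-contained, since it needs no positivity of the $M_d$, but it pushes the generic range to $n\ge 7$ and leaves more cases to verify.
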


However, we do not stop at first-order growth. By leveraging analytical bounds and algebraic factorizations, we uncover a striking second-order growth profile. We establish that the sequence of necklace polynomials $\{M_n(x)\}_{n \ge 1}$ is eventually log-convex, and we pinpoint the exact theoretical threshold for this behavior.

\begin{thm}
For any real number $x > 8$, the sequence of necklace polynomials $\{M_n(x)\}$ is log-convex for all $n \ge 2$. Furthermore, $x=8$ is the strict infimum for uniform log-convexity starting at $n=2$.
\end{thm}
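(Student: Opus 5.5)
The plan is to reduce log-convexity to a comparison between the main term $x^n/n$ and an explicit error term. The threshold $x=8$ should come entirely from the first nontrivial instance, $n=3$. Here log-convexity of $\{M_n(x)\}_{n\ge 2}$ means $M_n(x)^2\le M_{n-1}(x)M_{n+1}(x)$ for all $n\ge 3$, strictly for $x>8$.

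\textbf{The case $n=3$ and sharpness.} Using $M_2=(x^2-x)/2$, $M_3=(x^3-x)/3$ and $M_4=(x^4-x^2)/4$, we have
\[
M_3^2=\frac{x^2(x-1)^2(x+1)^2}{9},\qquad M_2M_4=\frac{x^3(x-1)^2(x+1)}{8}.
\]
For $x>1$ the inequality $M_3^2\le M_2M_4$ is therefore equivalent to $(x+1)/9\le x/8$, that is, $x\ge 8$. It holds with equality exactly at $x=8$ and fails for $1<x<8$. This gives the threshold claim in the statement: for every $x<8$ log-convexity fails already at $n=3$, and $x=8$ is the boundary case. It remains to prove strict inequality for all $n\ge 4$ when $x>8$.

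\textbf{Error-term bound.} Write $M_m(x)=\bigl(x^m-E_m(x)\bigr)/m$, where $E_m=-\sum_{d\mid m,\,d>1}\mu(d)x^{m/d}$, and set $e_m=E_m/x^m$.
\begin{enumerate}
\item For $x\ge 2$ we have $E_m\ge 0$. The dominant term is $+x^{m/p}$, with $p$ the smallest prime factor of $m$, and the remaining terms are bounded by $\sum_{j<m/p}x^j<x^{m/p}$.
\item All exponents $m/d$ with $d>1$ are at most $m/2$, so $E_m\le\sum_{j\le m/2}x^j\le \frac{x}{x-1}x^{m/2}$.
\item Hence, for $x>8$, $0\le e_m\le \tfrac87\,x^{-m/2}<\tfrac87\,8^{-m/2}$. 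For small $m$ the exact values are sharper, e.g. $e_3=x^{-2}$ and $e_5=x^{-4}$.
\end{enumerate}

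\textbf{Reduction for $n\ge 4$.} We can write
\[
\frac{M_{n-1}M_{n+1}}{M_n^2}=\frac{n^2}{n^2-1}\cdot\frac{(1-e_{n-1})(1-e_{n+1})}{(1-e_n)^2}.
\]
Since $e_n\ge 0$, we have $(1-e_n)^2\le 1$, and also $(1-e_{n-1})(1-e_{n+1})\ge 1-e_{n-1}-e_{n+1}$. So it suffices to prove
\[
(1-e_{n-1}-e_{n+1})\,\frac{n^2}{n^2-1}>1,
\]
which holds whenever $e_{n-1}+e_{n+1}<1/n^2$.

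\textbf{Verifying the sufficient condition.}
\begin{enumerate}
\item For $n=4$ we use the exact values: $e_3+e_5=x^{-2}+x^{-4}<1/64+1/4096<1/16$.
\item For $n\ge 5$ the bound gives $e_{n-1}+e_{n+1}<\tfrac{16}{7}\,8^{-(n-1)/2}$. At $n=5$ this is $\tfrac{16}{7}\cdot\tfrac1{64}\approx 0.036<0.04$.
\item For larger $n$, the left side decays geometrically while $1/n^2$ decays only polynomially. A one-line induction shows that the ratio $\bigl(8^{-(n-1)/2}\bigr)/n^{-2}$ is decreasing for $n\ge 5$, which completes the range $n\ge 5$.
\end{enumerate}

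\textbf{Main obstacle.} The delicate step is the small range $n=4,5$, where the crude bound from item 2 of the error-term bound is nearly tight. If the numerics are too close there, I would fall back on the exact polynomials $M_3,\dots,M_6$ and verify $M_n^2<M_{n-1}M_{n+1}$ directly for $x>8$, by factoring as in the $n=3$ computation. I would also double-check the positivity $E_m\ge 0$ for $m=n$, because it is used in the reduction to discard the factor $(1-e_n)^2$.
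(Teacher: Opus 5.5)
Your proof is correct, and it takes a more economical route than the paper's.

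\textbf{The paper's route.} It works additively with $\Delta_n=M_{n-1}M_{n+1}-M_n^2$. It splits off the leading term $x^{2n}/(n^2(n^2-1))$ and bounds all five cross terms of the remainder in absolute value, using $|E_k|\le 2x^{\lfloor k/2\rfloor}$. This gives $\Delta_n\ge \frac{x^{(3n+1)/2}}{n^2-1}\bigl(\frac{x^{(n-1)/2}}{n^2}-16\bigr)$. For $x$ near $8$ this is effective only when $n\ge 8$. So the paper must handle $n=2,\dots,7$ by explicit factorizations of $\Delta_n$. Some of these contain factors such as $x^4-23x^2-25x-24$, whose positivity on $(8,\infty)$ needs a further check.

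\textbf{Your route.} You pass to the normalized ratio and use the sign $E_m\ge 0$, which is the upper bound $M_m\le x^m/m$ of \cref{lem:bound}. This lets you discard the $E_n$-terms and the product $E_{n-1}E_{n+1}$ outright instead of estimating them. Combined with the sharper constant $x/(x-1)<8/7$, the remaining condition $e_{n-1}+e_{n+1}<1/n^2$ already holds from $n=5$ on. At $n=4$ it holds using the exact values $e_3=x^{-2}$ and $e_5=x^{-4}$. So the only exact computation you need is $n=3$, which is exactly where the sharpness sits.

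\textbf{What each buys.} Your argument replaces six factorizations with one and explains why $x=8$ is the threshold. In exchange, the paper's factorizations give the exact polynomial $\Delta_n$ for each small $n$, and hence each $n$'s individual threshold.

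\textbf{Your stated obstacle.} The concern in your last paragraph is unnecessary. The numbers you computed already settle $n=4$ and $n=5$, and $E_m\ge 0$ is proved in your item 1.

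\textbf{Two small remarks.}
\begin{enumerate}
\item The paper reads ``for all $n\ge 2$'' as $\Delta_n>0$ for every $n\ge 2$, which also includes $M_1M_3>M_2^2$. Your reduction covers this at once, since $e_1=0$ and $e_3=x^{-2}<1/4$.
\item Your $n=3$ computation gives the correct factorization $\Delta_3=\frac{1}{72}x^2(x-1)^2(x+1)(x-8)$. The paper's display drops one factor of $x-1$, though this does not change the sign for $x>1$.
\end{enumerate}
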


Ultimately, these continuous properties illuminate the discrete structures that initially motivated our study. The normalized function $\frac{M_n(q)}{q^n}$ measures the exact proportion of degree-$n$  monic polynomials in $\F_q[x]$ that are irreducible. Thus, our first theorem yields a striking algebraic consequence: for any fixed degree $n$, the density of monic irreducible polynomials strictly increases as the size of the base field $\F_q$ grows. Similarly, evaluating the limit of the ratio of consecutive necklace polynomials in the analysis of the second theorem provides a beautiful heuristic for the growth of these structures. In field-theoretic terms, when working over $\F_q$, the number of irreducible polynomials of degree $n+1$ is asymptotically $q$ times the number of irreducible polynomials of degree $n$. Combinatorially, this translates to a remarkably elegant rule of thumb: adding one more bead to a sufficiently long necklace will approximately increase the total number of primitive, rotationally distinct configurations by a factor of the number of available colors.

 \begin{figure}[htbp]
  \centering
  \includegraphics[width=0.8\textwidth]{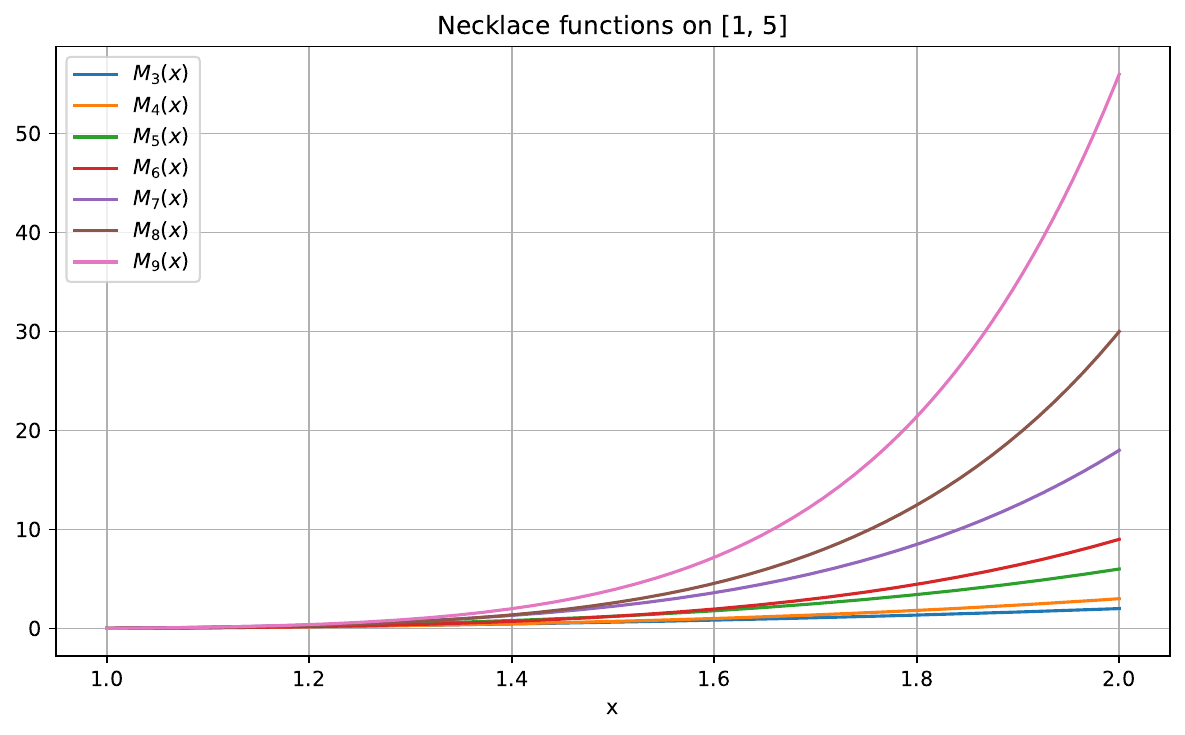}
  \caption{Family of necklace functions $M_n(x)$ on $[1,5]$ .}
  \label{fig:Mn}
\end{figure}

\section*{Acknowledgements}
The third-named author would like to thank Trevor Hyde for his helpful correspondence and interest in this project.

\section*{Code} 
The code that we wrote to do experiments with necklace polynomials can be found at \cite{nguyen_necklace}. 

\section{Recursive formulas for $M_n(x)$}
In this section, we recall several recursive properties of the necklace polynomials. Although some of these properties are well-known, we include brief proofs for the sake of self-containment.

\begin{prop} (see \cite[Page 11]{hyde2022cyclotomic})
If $p$ is a prime number and $d \geq 1$ then 
\begin{equation} \label{eq:recrusive}
M_{pd}(x) =
\begin{cases}
    \frac{1}{p} (M_d(x^p)-M_d(x)) & \text{if } p \nmid d, \\
    \frac{1}{p} M_d(x^p) & \text{if } p \mid d.
\end{cases}
\end{equation}

\end{prop}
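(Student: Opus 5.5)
The plan is to expand both sides directly from the definition and regroup the divisor sum of $pd$ according to whether the divisor is divisible by $p$. Since $\mu$ vanishes on non-squarefree integers, every divisor $e$ of $pd$ with $\mu(e)\neq 0$ is squarefree. Such an $e$ is either a divisor of $d$ coprime to $p$, or of the form $e = pe'$ with $e'\mid d$ and $p\nmid e'$.

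Accordingly, I would write
\[
pd\, M_{pd}(x) = \sum_{\substack{e\mid pd}} \mu(e) x^{pd/e} = \sum_{\substack{e \mid d\\ p\nmid e}} \mu(e) x^{pd/e} + \sum_{\substack{e'\mid d\\ p\nmid e'}} \mu(pe') x^{d/e'} .
\]
Multiplicativity gives $\mu(pe')=-\mu(e')$ when $p\nmid e'$. This yields
\[
pd\,M_{pd}(x)=\sum_{\substack{e\mid d\\ p\nmid e}}\mu(e)(x^p)^{d/e}-\sum_{\substack{e\mid d\\ p\nmid e}}\mu(e)x^{d/e}.
\]
One point needs care here. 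If $p\mid d$, the divisors of $pd$ also include $e$ with $p^2\mid e$ and multiples of $p$ that do not come from $d$. All of these are either non-squarefree or already counted, so the decomposition above is exhaustive.

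Now split into the two cases of the proposition.

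\emph{Case $p\nmid d$.} The condition $p\nmid e$ is automatic for $e\mid d$. The two sums are then exactly $d\,M_d(x^p)$ and $d\,M_d(x)$. Dividing by $pd$ gives $\frac1p(M_d(x^p)-M_d(x))$.

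\emph{Case $p\mid d$.} The first sum still needs to be identified with $d\,M_d(x^p)=\sum_{e\mid d}\mu(e)x^{pd/e}$, and the second sum has to be shown to vanish (or cancel).

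This bookkeeping is the main obstacle. The cleaner route in this case is to re-index the whole sum instead of using the split above. Write $pd\,M_{pd}(x)=\sum_{e\mid pd}\mu(e)x^{pd/e}$. Any squarefree $e\mid pd$ satisfies $e\mid d$: indeed $v_p(e)\le 1\le v_p(d)$, and the other prime exponents agree with those of $d$. So the sum equals $\sum_{e\mid d}\mu(e)(x^p)^{d/e}=d\,M_d(x^p)$. Dividing by $pd$ gives $\frac1p M_d(x^p)$.

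In hindsight, this observation suggests doing both cases uniformly: restrict to squarefree divisors and check whether $e\mapsto e$ or $e\mapsto e/p$ maps them into the divisors of $d$. I would present it that way, keeping the first case via the $\mu(pe')=-\mu(e')$ step.
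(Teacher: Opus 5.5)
Your proof is correct and is essentially the paper's argument: for $p\nmid d$ you write each divisor of $pd$ as $f$ or $pf$ with $f\mid d$ and use $\mu(pf)=-\mu(f)$, and for $p\mid d$ you use that the squarefree divisors of $pd$ are exactly the squarefree divisors of $d$. One correction to the route you abandoned: for $p\mid d$ the second sum $-\sum_{e'\mid d,\,p\nmid e'}\mu(e')x^{d/e'}$ does not vanish. It supplies exactly the missing terms $\sum_{e\mid d,\,p\mid e}\mu(e)(x^p)^{d/e}$ of $d\,M_d(x^p)$, so the uniform split would also have worked.
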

\begin{proof}
We consider two cases. \\
\textbf{Case 1: $p | d$} We observe that the square-free divisors of $pd$ coincide with the square-free divisors of $d$.  Therefore:
\begin{align*}
M_{pd}(x) &= \frac{1}{pd}\sum_{e|pd}\mu(e)x^{pd/e} = \frac{1}{p} \left( \frac{1}{d}\sum_{e|d}\mu(e)(x^{p})^{d/e} \right) = \frac{1}{p}M_{d}(x^{p}). 
\end{align*}

\noindent
\textbf{Case 2: $p \nmid d$}
In this case, each divisor $e$ of $pd$ is either a divisor $f$ of $d$, or it is $pf$ for some divisor $f$ of $d$ Therefore:
\begin{align*}
M_{pd}(x) &= \frac{1}{pd} \left( \sum_{f|d}\mu(f)x^{\frac{pd}{f}} - \sum_{f|d}\mu(f)x^{d/f} \right) = \frac{1}{p}(M_{d}(x^{p}) - M_{d}(x)). 
\qedhere
\end{align*}
\end{proof}

Next, we show that $M_n(x)$ has a quite interesting combinatorial property. 
\begin{prop}(see \cite{metropolis1983witt})
    For all $x,y$, we have 
\begin{equation} \label{eq:product_rule}
    M_n(xy)=\sum _{\lcm(i,j)=n}\gcd(i,j)M_i(x )M_j(y). 
\end{equation}
\end{prop}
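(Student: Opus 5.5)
We prove the identity $M_n(xy)=\sum_{\lcm(i,j)=n}\gcd(i,j)M_i(x)M_j(y)$ as an identity of polynomials in two variables. It therefore suffices to check it for all positive integers $x,y$, or purely formally. The cleanest route avoids combinatorics and works with the inversion formula. The defining relation $\frac{1}{n}\sum_{d\mid n}\mu(d)x^{n/d}=M_n(x)$ is equivalent, by M\"obius inversion, to
\[
x^N=\sum_{n\mid N} n\,M_n(x)\qquad\text{for every } N\ge 1 .
\]
The strategy is to show that the polynomials $P_n(x,y):=\sum_{\lcm(i,j)=n}\gcd(i,j)M_i(x)M_j(y)$ satisfy the same relation with $xy$ in place of $x$, namely $\sum_{n\mid N} nP_n(x,y)=(xy)^N$. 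M\"obius inversion then forces $P_n(x,y)=M_n(xy)$ for every $n$, since the system $\sum_{n\mid N} nP_n=(xy)^N$, $N\ge1$, determines the $P_n$ uniquely (triangular with invertible diagonal).

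To verify this, compute
\[
\sum_{n\mid N} nP_n(x,y)=\sum_{n\mid N}\ \sum_{\lcm(i,j)=n} n\gcd(i,j)\,M_i(x)M_j(y).
\]
The key arithmetic fact is $\lcm(i,j)\gcd(i,j)=ij$, so each summand equals $iM_i(x)\cdot jM_j(y)$. As $n$ ranges over divisors of $N$ and $(i,j)$ over pairs with $\lcm(i,j)=n$, the pair $(i,j)$ ranges exactly once over all pairs with $i\mid N$ and $j\mid N$, because $\lcm(i,j)\mid N$ iff $i\mid N$ and $j\mid N$. The double sum therefore factors as
\[
\Big(\sum_{i\mid N} iM_i(x)\Big)\Big(\sum_{j\mid N} jM_j(y)\Big)=x^N y^N=(xy)^N,
\]
using the inversion relation twice.

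The only step needing care is the initial equivalence $x^N=\sum_{n\mid N}nM_n(x)$. Since $nM_n(x)=\sum_{d\mid n}\mu(d)x^{n/d}=\sum_{d\mid n}\mu(n/d)x^{d}$, this is the standard M\"obius inversion pair $f(N)=\sum_{n\mid N}g(n)$ with $g(n)=nM_n(x)$ and $f(N)=x^N$. Applying the same inversion to the family $P_n$ then yields $nP_n(x,y)=\sum_{d\mid n}\mu(n/d)(xy)^d=nM_n(xy)$, completing the proof. I expect no real obstacle; the main point to state cleanly is the bijective reindexing of the pairs $(i,j)$ by $n=\lcm(i,j)$.
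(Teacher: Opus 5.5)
Your proposal is correct and follows essentially the same route as the paper: expand $(xy)^N=x^Ny^N$ via $x^N=\sum_{n\mid N}nM_n(x)$, use $\lcm(i,j)\gcd(i,j)=ij$, and regroup the pairs $(i,j)$ by $n=\lcm(i,j)$. The only difference is cosmetic. The paper finishes by induction on $n$, while you invoke the triangular (M\"obius) uniqueness of the system $\sum_{n\mid N}nP_n=(xy)^N$, which is the same argument stated non-inductively.
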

\begin{proof}
By the Mobius inversion formula, we have  $x^n = \sum_{d|n} d M_d(x)$. Therefore, we have 
\begin{equation*}
(xy)^n = \sum_{d|n} d M_d(xy).
\end{equation*}
Also, since $(xy)^n = x^n y^n$
\begin{align*}
(xy)^n &= \left( \sum_{i|n} i M_i(x) \right) \left( \sum_{j|n} j M_j(y) \right) =\sum_{i|n}\sum_{j|n} ij M_i(x) M_j(y) \\
&= \sum_{i,j|n} \text{lcm}(i,j) \gcd(i,j) M_i(x) M_j(y) = \sum_{d|n} d \left( \sum_{\text{lcm}(i,j)=d} \gcd(i,j) M_i(x) M_j(y) \right). 
\end{align*}

Comparing these two expressions for $(xy)^n$ and using induction on $n$ (noting $M_1(xy) = xy = M_1(x)M_1(y)$), we deduce that
\[ d M_d(xy) = \sum_{\text{lcm}(i,j)=d} \gcd(i,j) M_i(x) M_j(y)  \]
for each $d|n$. Since $n$ is arbitrary, this identity holds for each $d \in \mathbb{N}$.
\end{proof}

\section{Monotonicity on $[2, \infty)$}
In this section, by bounding tail terms with geometric series, we show that $M_n(x)$ and all its derivatives are increasing on $[2,\infty)$. Moreover, the argument applies to a broad class of polynomials whose leading coefficient is  dominant over the remaining coefficients. We first prove a weaker statement, whose key idea can be generalized. Our main motivation is the question that we asked in the introduction: for fixed $n$, does the number of degree-$n$ irreducible polynomials increase as the size $q$ of the finite field $\mathbb{F}_q$ grows? The answer is YES, as shown in the following proposition. 

\begin{prop} \label{prop:geometric-estimate}
Let $n \geq 1.$ Then, for each $x \geq 2$, $M_n(x+1) \geq M_n(x).$
\end{prop}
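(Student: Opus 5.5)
The plan is to show directly that $nM_n(x+1) - nM_n(x) \geq 0$ for $x \geq 2$. The idea is to isolate the leading difference $(x+1)^n - x^n$ and bound everything else by a geometric series, as the section introduction suggests. The case $n=1$ is trivial since $M_1(x)=x$, so assume $n\ge 2$. Write
\[
n\bigl(M_n(x+1)-M_n(x)\bigr) = \bigl((x+1)^n - x^n\bigr) + \sum_{d\mid n,\, d>1} \mu(d)\bigl((x+1)^{n/d} - x^{n/d}\bigr).
\]
Each bracket $(x+1)^m - x^m$ is nonnegative and increasing in $m$. Also $|\mu(d)|\le 1$, and every proper exponent $n/d$ with $d>1$ is at most $n/2$ and at least $1$. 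So the tail is bounded in absolute value by
\[
\sum_{m=1}^{\lfloor n/2\rfloor} \bigl((x+1)^m - x^m\bigr).
\]

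The key step is to compare this tail with the leading term. One can use the mean value theorem form $(x+1)^m - x^m = \int_x^{x+1} m t^{m-1}\,dt$. This reduces the claim to showing that, for every $t\in[x,x+1]$ with $t\ge 2$,
\[
n t^{n-1} \ge \sum_{m=1}^{\lfloor n/2\rfloor} m t^{m-1}.
\]
Integrating that pointwise inequality over $[x,x+1]$ then gives the result. For the pointwise bound, estimate crudely:
\[
\sum_{m\le n/2} m t^{m-1} \le \frac{n}{2}\sum_{m=0}^{\lfloor n/2\rfloor -1} t^{m} \le \frac{n}{2}\cdot \frac{t^{\lfloor n/2\rfloor}}{t-1} \le \frac{n}{2}\, t^{n/2}
\]
for $t\ge 2$. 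This is at most $n t^{n-1}$ because $n/2 \le n-1$ when $n\ge 2$. We even get strict inequality, which is more than needed.

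The main obstacle is only bookkeeping: the signs of $\mu(d)$ and the possibility that some exponents $n/d$ coincide. Coincidence is harmless, since distinct divisors $d$ give distinct $n/d$, so each $m$ appears at most once and the geometric-series bound is valid. The threshold $x\ge 2$ enters exactly through $t/(t-1)\le 2$ in the geometric-series step. If one prefers to avoid integrals, the same argument works with the discrete inequality $(x+1)^n - x^n \ge (x+1)^{n-1}$ together with the geometric sum
\[
\sum_{m\le n/2}(x+1)^m \le (x+1)^{n/2+1}/x.
\]
In that version one checks separately the finitely many small $n$ for which the exponent comparison is tight.
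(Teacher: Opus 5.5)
Your proof is correct and follows the same strategy as the paper. Both arguments isolate $(x+1)^n-x^n$ and dominate the M\"obius tail termwise, using $|\mu(d)|\le 1$ and $(x+1)^m-x^m\ge 0$. Both then reduce to showing that an auxiliary function of the form $t^n$ minus a geometric-type sum is increasing on $[2,\infty)$.

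The only difference is how that last step is done. The paper keeps all exponents $0,\dots,n-1$ and proves $g_n(x)=x^n-\frac{x^n-1}{x-1}$ is increasing via the exact identity $g_n(x)=1+(x-2)(1+x+\cdots+x^{n-1})$. You keep only exponents up to $\lfloor n/2\rfloor$ and check the derivative inequality $nt^{n-1}>\sum_{m\le n/2} m t^{m-1}$ with a crude geometric estimate, which also holds.
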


\begin{proof}
The result is trivial for \( n = 1 \) since $M_1(x)=x$. Let us assume \( n \geq 2 \) throughout. Consider the function
\[
M_n(x) := \frac{1}{n} \sum_{d \mid n} \mu(d)  x^{n/d}.
\]
We want to show that \( M_n(x+1) - M_n(x) > 0 \) for \( x \geq 2 \).

\begin{align*}
    M_n(x+1)-M_n(x) = &= \frac{1}{n} \sum_{d \mid n} \mu(d) \cdot (x+1)^{n/d} - \frac{1}{n} \sum_{d \mid n} \mu(d) \cdot x^{n/d} \\
    &= \frac{1}{n} \sum_{d \mid n} \mu(d) \left((x+1)^{n/d} - x^{n/d}\right) \\
    &= \frac{1}{n} \left[(x+1)^n - x^n + \sum_{d \mid n, d > 1} \mu(d) \left((x+1)^{n/d} - x^{n/d}\right)\right] \\
    &\geq \frac{1}{n} \left[(x+1)^n - x^n - \sum_{i = 0}^{n-1} \left((x+1)^{i} - x^i\right)\right] \\
    &= \frac{1}{n} \left[(x+1)^n - x^n - \left(\frac{(x+1)^n - 1}{(x+1)-1 } - \frac{x^n - 1}{x - 1}\right)\right] \\
    & = \frac{1}{n} \left[ \left ((x+1)^n -   \frac{(x+1)^n - 1}{(x+1) - 1} \right)   - \left( x^n  - \frac{x^n - 1}{x - 1} \right) \right].
\end{align*}

We will be done if we can show that the function
\[
g_n(x) := x^n - \frac{x^n - 1}{x - 1}
\]
is an increasing function for \( x \geq 2 \). To this end, consider the following identities.

\begin{eqnarray*}
g_n(x) &=& (x^n - 1) + 1 - (1 + x + \dots + x^{n-1}) \\
&= &(x - 1)(1 + x + \dots + x^{n-1}) + 1 - (1 + x + \dots + x^{n-1}) \\
&= &1 + (x - 2)(1 + x + \dots + x^{n-1}).
\end{eqnarray*}

Now both functions $x - 2$ and $1 + \dots + x^{n-1}$ are increasing and positive on the interval $(2, \infty)$. This shows that $g_n(x)$ is an increasing function.
\end{proof}

As we explained before, the argument using the geometric series estimate described in \cref{prop:geometric-estimate} can be formalized. 

\begin{prop} \label{prop:general}
    Let $f= a_nx^n+a_{n-1}x^{n-1}+\ldots+a_0 \in \R[x]$ be a polynomial such that $a_n>0$ and $|a_i| \leq a_n$ for each $0 \leq i \leq n.$ Then $f$ is an increasing function on $[2, \infty).$
\end{prop}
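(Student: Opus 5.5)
We will show that $f'(x)>0$ for every $x>2$. Since $f'$ is a polynomial, this gives strict monotonicity on $[2,\infty)$. For $n=0$ the statement is degenerate, since $f$ is a positive constant, so assume $n\ge 1$. The idea is to bound the lower-order part of $f'$ by a geometric series, as in \cref{prop:geometric-estimate}. Writing
\[
f'(x)=n a_n x^{n-1}+\sum_{i=1}^{n-1} i a_i x^{i-1},
\]
the hypothesis $|a_i|\le a_n$ gives, for $x>1$,
\[
f'(x)\ \ge\ a_n\Bigl(n x^{n-1}-\sum_{i=1}^{n-1} i x^{i-1}\Bigr)=a_n\,\frac{d}{dx}\Bigl(x^n-\frac{x^n-1}{x-1}\Bigr)=a_n\,g_n'(x).
\]
Here $g_n(x)=x^n-\frac{x^n-1}{x-1}=1+(x-2)(1+x+\dots+x^{n-1})$ is exactly the function from the proof of \cref{prop:geometric-estimate}, and $\sum_{i=1}^{n-1} i x^{i-1}$ is the derivative of $1+x+\dots+x^{n-1}=\frac{x^n-1}{x-1}$.

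The key step is therefore to show $g_n'(x)>0$ for $x>2$. We get this directly from the factorization:
\[
g_n'(x)=(1+x+\dots+x^{n-1})+(x-2)(1+2x+\dots+(n-1)x^{n-2}).
\]
For $x\ge 2$ the first term is at least $1$ and the second term is nonnegative, so $g_n'(x)>0$ on all of $[2,\infty)$, including the endpoint. Multiplying by $a_n>0$ gives $f'(x)>0$ on $[2,\infty)$, and hence $f$ is strictly increasing there.

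The main thing to get right is the first inequality. Each term $i a_i x^{i-1}$ with $1\le i\le n-1$ is at least $-i a_n x^{i-1}$ because $x>0$. The constant term $a_0$ drops out on differentiation, so it needs no control. The leading term is handled separately because its coefficient is exactly $a_n$ rather than merely bounded by it. No further obstacle arises, since the geometric-series bound has already been absorbed into $g_n$.

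As an alternative route that avoids derivatives, one could mimic \cref{prop:geometric-estimate} exactly: for $2\le x<y$, write
\[
f(y)-f(x)\ \ge\ a_n\bigl[(y^n-x^n)-\textstyle\sum_{i<n}(y^i-x^i)\bigr]=a_n\,(g_n(y)-g_n(x))>0.
\]
This uses $y^i-x^i\ge 0$ together with $|a_i|\le a_n$, and the fact that $g_n$ is increasing. I would present the derivative version, since it also sets up the later application to the derivatives $M_n^{(k)}$.
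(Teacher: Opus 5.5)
Your proof is correct and is essentially the paper's argument. The paper shows $f(x_2)-f(x_1)\ge a_n\bigl(g_n(x_2)-g_n(x_1)\bigr)$ for $x_2>x_1\ge 2$ and then uses that $g_n$ is increasing, as in \cref{prop:geometric-estimate}; this is exactly your ``alternative route,'' and your main derivative version $f'\ge a_n g_n'$, with $g_n'>0$ read off from $g_n=1+(x-2)(1+x+\dots+x^{n-1})$, is just its infinitesimal form.
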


\begin{proof}
If $n=1$ then $f$ is a linear function with positive slope. Therefore, $f$ is increasing. Let us assume that $n \geq 2.$ Let $x_2>x_1 \geq 2.$ Then 
    \begin{align*}
    f(x_2)-f(x_1) &=\sum_{i=0}^n a_i(x_2^i-x_1^i) = a_n(x_2^n-x_1^n)-\sum_{i<n} a_i(x_2^i-x_1^i) \\
                & \geq a_n(x_2^n-x_1^n)-a_n \left[ \sum_{i<n} (x_2^i-x_1^i)\right] \\
                & \geq a_n \left[ x_2^n-x_1^n -\frac{x_2^n-1}{x_2-1}+\frac{x_1^n-1}{x_1-1} \right] \\
                & \geq a_n (g_n(x_2)-g_n(x_1)>0.
    \end{align*}
    Here $g_n(x)$ is the function defined in the proof of Proposition~\ref{prop:geometric-estimate}.
\end{proof}

\begin{cor}
    Let $f$ be a polynomial satisfying the condition of \cref{prop:general}. Then for each $k<n$, $f^{(k)}(x)$ is an increasing function on $[2, \infty)$ where $f^{(k)}(x)$ is the $k$-derivative of $f.$
\end{cor}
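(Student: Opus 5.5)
The natural route is to apply \cref{prop:general} directly to a suitable rescaling of $f^{(k)}$, so we only need to check that the coefficients of the $k$-th derivative still satisfy the leading-coefficient dominance hypothesis. Write
\[
f^{(k)}(x)=\sum_{i=k}^{n} a_i\,\frac{i!}{(i-k)!}\,x^{i-k}=\sum_{j=0}^{n-k} b_j x^j,\qquad b_j=a_{j+k}\frac{(j+k)!}{j!}.
\]
This is a polynomial of degree $n-k\ge 1$, because we assume $k<n$. Its leading coefficient is $b_{n-k}=a_n\frac{n!}{(n-k)!}>0$.

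The key step is the comparison $|b_j|\le b_{n-k}$ for all $0\le j\le n-k$. We have
\[
|b_j|=|a_{j+k}|\frac{(j+k)!}{j!}\le a_n\frac{(j+k)!}{j!}.
\]
The falling factorial $\frac{(j+k)!}{j!}=(j+1)(j+2)\cdots(j+k)$ is a product of $k$ consecutive integers, so it is increasing in $j$. Its largest value occurs at $j=n-k$, where it equals $\frac{n!}{(n-k)!}$. Hence $|b_j|\le a_n\frac{n!}{(n-k)!}=b_{n-k}$, and $f^{(k)}$ satisfies the hypotheses of \cref{prop:general} with degree $n-k$.

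Applying \cref{prop:general} (the $n-k=1$ case included, where it is just a line of positive slope) shows that $f^{(k)}$ is increasing on $[2,\infty)$. We do not need to normalize by $b_{n-k}$, since \cref{prop:general} only requires the leading coefficient to dominate the others in absolute value. The only point needing care is the monotonicity of the falling factorial in $j$, which is immediate because every factor grows with $j$. So there is no real obstacle here. The case $k=n$ has to be excluded, as stated, because there $f^{(n)}$ is a constant.
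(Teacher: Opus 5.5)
Your proof is correct and follows the paper's argument. The paper checks that $f'$ again satisfies the hypothesis of \cref{prop:general} (via $|b_i|=(i+1)|a_{i+1}|\le n a_n$) and then iterates by induction, whereas you verify the dominance condition for $f^{(k)}$ in a single step using the monotonicity of $(j+1)(j+2)\cdots(j+k)$ in $j$; both proofs then conclude by applying \cref{prop:general} to the derivative.
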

\begin{proof}
Let us first consider the case $k=1.$
We have 
\[ f'(x)= \sum_{i=1}^n (ia_i)x^{i-1} = \sum_{i=0}^{n-1} b_ix^i, \]
where $b_i = (i+1)a_{i+1}.$ We can see that $b_{n-1}>0$ and $|b_i| =(i+1)|a_{i+1}| \leq n|a_n|=b_{n-1}$ for each $0 \leq i \leq n-1.$ Therefore, $f'(x)$ also satisfies the condition of \cref{prop:general}. Therefore, $f'(x)$ is increasing on $[2, \infty).$ By induction, $f^{(k)}(x)$ is increasing for each $k<n.$
\end{proof}

\begin{cor}
    For each $k <n$, $M_n^{(k)}(x)$ is increasing on $[2, \infty).$ In particular, for each $n \geq 2$, $M_n(x)$ is a convex function on $[2, \infty).$
\end{cor}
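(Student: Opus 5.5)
The plan is to apply the preceding corollary (derivatives of polynomials satisfying the hypothesis of \cref{prop:general}) to the polynomial $f = nM_n(x) = \sum_{d\mid n}\mu(d)x^{n/d}$. Rescaling by the positive constant $n$ changes neither monotonicity nor convexity, so it is enough to check that $f$ satisfies the hypothesis of \cref{prop:general}.

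\textbf{Step 1: the hypothesis holds.} The leading coefficient of $f$ is $\mu(1)=1>0$. For distinct divisors $d$ the exponents $n/d$ are distinct, so no two terms combine. Hence every coefficient of $f$ is either $0$ or $\mu(d)\in\{-1,0,1\}$, and every coefficient has absolute value at most $1=a_n$.

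\textbf{Step 2: monotonicity of the derivatives.} By the corollary, $f^{(k)} = nM_n^{(k)}$ is increasing on $[2,\infty)$ for each $k<n$. Therefore $M_n^{(k)}$ is increasing on $[2,\infty)$ for $k<n$. The case $k=0$ is \cref{prop:general} itself, which upgrades \cref{prop:geometric-estimate} to genuine monotonicity.

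\textbf{Step 3: convexity.} For $n\ge 2$, take $k=1<n$. Then $M_n'$ is increasing on $[2,\infty)$, and a differentiable function with increasing derivative on an interval is convex there. Alternatively, take $k=2$ when $n\ge3$ to get $M_n''\ge 0$.

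\textbf{Possible gap.} The only point that needs care is whether the corollary's induction keeps the dominance condition, since the leading coefficient grows under differentiation. The corollary's own proof handles this: $|b_i|=(i+1)|a_{i+1}|\le n a_n=b_{n-1}$. So nothing further is required, and I expect no real obstacle beyond checking that the coefficient bound in Step 1 is stated correctly.
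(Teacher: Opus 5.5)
Your proof is correct and is the argument the paper intends (the corollary is stated without proof, directly after the general corollary): $nM_n(x)=\sum_{d\mid n}\mu(d)x^{n/d}$ has leading coefficient $1$ and all other coefficients in $\{-1,0,1\}$, so the preceding corollary applies, and taking $k=1$ gives convexity for $n\ge 2$. The one slip is your aside about $k=2$: knowing that $M_n''$ is increasing does not by itself give $M_n''\ge 0$ (that follows from $M_n'$ being increasing), but your main convexity argument does not rely on it.
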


\section{Stronger monotonic properties of $M_n(x)$}
In this section, we discuss some stronger monotonic properties of $M_n(x)$ and its higher derivatives. We will discuss these properties via two different approaches. The first approach utilizes the combinatorial properties of $M_n(x)$ described in \cref{eq:recrusive} and \cref{eq:product_rule}.

\begin{thm} \label{thm:main}
    Let $n \geq 2$ be a positive integer. Then 
    \begin{enumerate}
        \item $M_n(x)>0$ for $x \in (1, \infty).$
        \item $\dfrac{M_n(x)}{x^n}$ is an increasing function on $[1, \infty).$
    \end{enumerate}
\end{thm}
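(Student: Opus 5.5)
The plan is a simultaneous induction on $n$, carrying both statements together and also using the trivial case $M_1(x)=x$. Positivity for $n$ will come from the recursive formula \eqref{eq:recrusive}, using monotonicity of $M_d$ for $d\mid n$, $d<n$. Monotonicity of $M_n(x)/x^n$ will then come from the product rule \eqref{eq:product_rule}, using positivity of every $M_i$ with $i\mid n$, including $i=n$. So the order within the inductive step is (1) first, then (2).

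\textbf{Step 1: positivity.} Write $n=pd$ with $p$ a prime.
\begin{itemize}
\item If $p\mid d$, then $M_n(x)=\frac1p M_d(x^p)$. This is positive for $x>1$ by the inductive hypothesis applied to $d$, since $d\ge 2$ and $x^p>1$.
\item If $p\nmid d$, then $M_n(x)=\frac1p\bigl(M_d(x^p)-M_d(x)\bigr)$. Here I need $M_d$ to be strictly increasing on $[1,\infty)$.
\begin{itemize}
\item For $d=1$ this is clear.
\item For $d\ge2$, the inductive hypothesis says $M_d(x)/x^d$ is increasing and nonnegative on $[1,\infty)$: it is $0$ at $x=1$ and positive on $(1,\infty)$. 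Hence $M_d(x)=x^d\cdot \bigl(M_d(x)/x^d\bigr)$ is a product of a strictly increasing positive factor and an increasing nonnegative factor that is positive for $x>1$. It is therefore strictly increasing on $[1,\infty)$.
\end{itemize}
Since $x^p>x\ge1$ for $x>1$, we get $M_d(x^p)>M_d(x)$, so $M_n(x)>0$.
\end{itemize}
Note also that $M_i(1)=0$ for $i\ge2$ and $M_1(1)=1$. So every $M_i$ with $i\mid n$ is nonnegative on $[1,\infty)$ and positive on $(1,\infty)$.

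\textbf{Step 2: monotonicity.} It suffices to show that for $x\ge1$ and $y>1$,
\[
\frac{M_n(xy)}{(xy)^n}>\frac{M_n(x)}{x^n},
\]
equivalently $M_n(xy)>y^nM_n(x)$. Apply \eqref{eq:product_rule} and note that every term $\gcd(i,j)M_i(x)M_j(y)$ is nonnegative by Step 1.
\begin{itemize}
\item Keep only the pairs with $i=n$ (any $j\mid n$ then satisfies $\lcm(n,j)=n$). They contribute
\[
M_n(x)\sum_{j\mid n} jM_j(y)=M_n(x)\,y^n,
\]
using $\gcd(n,j)=j$ and the Möbius identity $y^n=\sum_{j\mid n}jM_j(y)$ from the proof of \eqref{eq:product_rule}.
\item Among the remaining terms is the pair $(i,j)=(1,n)$. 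It contributes $M_1(x)M_n(y)=xM_n(y)$, which is strictly positive because $x\ge1$ and $y>1$ (this uses Step 1 for $n$ itself).
\end{itemize}
Hence $M_n(xy)>y^nM_n(x)$, which gives strict increase of $M_n(x)/x^n$ on $[1,\infty)$.

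\textbf{Main obstacle.} The delicate point is the order of the induction. Step 2 needs positivity of $M_n$ itself, not just of smaller necklace polynomials, while the recursion in Step 1 needs the monotonicity of $M_d$ for proper divisors only. Checking that this ordering is not circular, and handling the endpoint $x=1$ where $M_i(1)=0$, is where care is required. The remaining algebra is routine.
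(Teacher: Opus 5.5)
Your proof is correct and is essentially the paper's first (inductive) proof. It gets positivity of $M_n$ from the recursion $M_{pd}(x)=\frac1p\bigl(M_d(x^p)-M_d(x)\bigr)$ (or $\frac1p M_d(x^p)$) via strict monotonicity of $M_d$, and then strict growth of $M_n(x)/x^n$ from the product rule by isolating the terms with one index equal to $n$, which sum to $y^nM_n(x)$. Your write-up is slightly more careful than the paper's: you name the pair $(1,n)$ as the source of strictness, treat the endpoint $x=1$ directly, and note that the product-rule step needs the positivity of $M_n$ itself that Step 1 provides.
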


First, we will provide a proof using induction; specifically using the recursive formula \cref{eq:product_rule}.
\begin{proof}
We will prove these statements simultaneously by induction.  For $n=2$ or $n$ is a prime number, we can check that the statements hold. Suppose that the statements have been proved for all $k \leq n-1.$ Let us prove it for $n.$ Let $p$ be a prime divisor of $n$ and $d \in \N$ such that $n=pd.$ If $p \mid d$ then by \cref{eq:recrusive} we have 
\[ M_n(x) = M_{dp}(x) =  \frac{1}{p}M_d(x^p). \]
By induction, we can check that both statements in \cref{thm:main} hold for $M_n(x).$ Now, let us consider the case $p \nmid d.$ We then have 
\[ M_{pd}(x) = \dfrac{1}{p} (M_d(x^p)-M_d(x)) .\] 
By induction, $M_d(x)/x^d$ is an increasing function on $[1, \infty)$. In particular, $M_d(x)$ is increasing on $[1, \infty).$ Since $x^p >x >1$, we conclude that 
\[ M_{pd}(x) = \dfrac{1}{p} (M_d(x^p)-M_d(x))>0.\]
This proves the first statement. For the second statement, suppose that $x_2 > x_1 >1.$ We claim that $M_n(x_2)/x_2^n>M_n(x_1)/x_1^n$ In fact, let $\alpha = \dfrac{x_2}{x_1}$ then by \cref{eq:product_rule}
\[ M_n(x_2) = M_n(\alpha x_1) = \sum _{\lcm(i,j)=n}\gcd(i,j)M_i(\alpha )M_j(x_1). \]
We remark that in the above sum, $1 \leq i \leq j \leq n.$ Furthermore, by induction, we also know further that $M_k(x)>0$ for $k \leq n$ and $x>1$. Therefore, by filtering out the terms where $j=n$, we have the following inequality 
\[ M_n(x_2) = M_n(\alpha x_1) > \sum_{i \mid n} \gcd(i,n)M_i(\alpha )M_n(x_1) = \left(\sum_{i \mid n} iM_i(\alpha) \right) M_n(x_1) = \alpha^n M_n(x_1). \]
This shows that $M_n(x_2)/x_2^n> M_n(x_1)/x_1^n.$ By the induction principle, \cref{thm:main} is proved for all $n.$
\end{proof}

Using some techniques from analytic number theory,  specifically Jordan totient functions and Taylor series,  we now give an alternative (and noninductive!) proof of the previous theorem.

\begin{proof}
Let $f_n(x) := \frac{M_n(x)}{x^n} =  \frac{1}{n} \sum_{d|n} \mu(d) x^{n/d - n}$. We will first show that $f_n'(x) > 0$ for $x>1$.  

Taking the derivative with respect to $x$ gives:
\[f_n'(x)  = \frac{1}{n x^{n+1}} \sum_{d|n} \mu(d) \left(\frac{n}{d} - n\right) x^{n/d}. \]
Let $m = n/d$. As $d$ ranges over the divisors of $n$, $m$ ranges over the exact same divisors. Substituting $m$ for $n/d$ and rearranging gives a function $Q_n(x)$:
\[Q_n(x) = n x^{n+1} f_n'(x) = \sum_{m|n} \mu(n/m) (m - n) x^m.\]
It is enough to show that $Q_n(x) > 0$ for all $x > 1$.

We now  map the domain $x > 1$ to  positive real numbers using the exponential substitution $x = e^t$. Setting $R_n(t) := Q_n(e^t)$ for $t > 0$, we get
\[R_n(t) = \sum_{m|n} \mu(n/m) (m - n) e^{mt}.\]
It suffices to show that $R_n(t) >0$ for $t >0$.
Substituting the Taylor series for $e^x$ in the above equation gives:
\[R_n(t) = \sum_{m|n} \mu(n/m) (m - n) \sum_{k=0}^{\infty} \frac{m^k t^k}{k!}.\]

Since the sum over the divisors is finite and the Taylor series for the exponential function converges absolutely everywhere, we can exchange the order of summation:
\[R_n(t) = \sum_{k=0}^{\infty} \frac{t^k}{k!} \left( \sum_{m|n} \mu(n/m) (m^{k+1} - n m^k) \right).\]

Let $C_k$ be the term in the parenthesis:
\[C_k = \sum_{m|n} \mu(n/m) m^{k+1} - n \sum_{m|n} \mu(n/m) m^k. \]

It is a well-known fact that the Mobius inversion of a power function (aka the Jordan totient function) is given by a product; see \cite[Exercise 3.2.1.12]{MurtySinha2024}.
\[J_k(n) = \sum_{m|n} \mu(n/m) m^k = n^k \prod_{p|n} \left(1 - p^{-k}\right).\]

Our coefficent $C_k$ can be expressed in terms of Jordan totient functions as:
\[C_k = J_{k+1}(n) - n J_k(n).\]
We will be done if we can argue that  $C_k \ge 0$ for all $k \ge 0$.

Because $J_0(n) = \sum_{m|n} \mu(n/m) = 0$ (for $n \ge 2$), and $J_1(n) =  \sum_{m|n} \mu(n/m) m = \phi(n)$ we have:
\[C_0 = J_1(n) - 0 = \phi(n) > 0.\]
Substitute the explicit product formula for the Jordan totient function into $C_k$ for $k >0$, we get
\[
\begin{aligned}C_k &= n^{k+1} \prod_{p|n} \left(1 - p^{-(k+1)}\right) - n \cdot n^k \prod_{p|n} \left(1 - p^{-k}\right)\\
 &= n^{k+1} \left[ \prod_{p|n} \left(1 - \frac{1}{p^{k+1}}\right) - \prod_{p|n} \left(1 - \frac{1}{p^k}\right) \right].
 \end{aligned}\]

Since every prime factor $p \ge 2$, $1/p^{k+1} < 1/p^k$, and therefore
\[ 1 - \frac{1}{p^{k+1}} > 1 - \frac{1}{p^k} > 0,\]
we have strict inequality for every single prime factor in the product. Thus  the first product is strictly larger than the second product, showing that $C_k > 0$.  This completes the proof of the theorem.

\end{proof}

\begin{cor}
    Let $n \geq 1$ be a positive integer. Then $M_n(x)$ is increasing on $[1, \infty).$
\end{cor}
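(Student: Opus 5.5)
The plan is to deduce the corollary directly from \cref{thm:main}, treating the case $n=1$ separately. For $n=1$ we have $M_1(x)=x$, which is strictly increasing on all of $\R$, so there is nothing to prove. So assume $n\ge 2$ from now on.

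For $n \ge 2$, write $M_n(x) = x^n \cdot \frac{M_n(x)}{x^n}$. On $[1,\infty)$ the first factor $x^n$ is positive and strictly increasing. By part (2) of \cref{thm:main}, the second factor $f_n(x)=M_n(x)/x^n$ is increasing on $[1,\infty)$. By part (1) it is strictly positive on $(1,\infty)$, and at $x=1$ we have $M_n(1)=\frac1n\sum_{d\mid n}\mu(d)=0$. Hence $f_n \ge 0$ on $[1,\infty)$.

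Now take $1 \le x_1 < x_2$. If $x_1 = 1$, then $M_n(x_1)=0<M_n(x_2)$ by part (1). If $x_1>1$, then
\[
M_n(x_2) = x_2^n f_n(x_2) \ge x_2^n f_n(x_1) > x_1^n f_n(x_1) = M_n(x_1).
\]
The first inequality uses that $f_n$ is increasing and $x_2^n>0$. The second uses $f_n(x_1)>0$ and $x_2^n > x_1^n$. This gives strict monotonicity of $M_n$ on $[1,\infty)$.

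There is no real obstacle here. The only point needing care is that a product of two increasing functions is increasing only when both factors are nonnegative. That is exactly why part (1) of \cref{thm:main}, together with the computation $M_n(1)=0$, is needed alongside part (2). An alternative route is to differentiate: $M_n'(x) = n x^{n-1} f_n(x) + x^n f_n'(x)$, where both terms are nonnegative on $[1,\infty)$ by the second proof of \cref{thm:main}, which gives $f_n'>0$ for $x>1$.
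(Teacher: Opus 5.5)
Your argument is correct and is essentially the paper's. The corollary is stated without a separate proof, as an immediate consequence of Theorem~\ref{thm:main}, via the factorization $M_n(x)=x^n\cdot \frac{M_n(x)}{x^n}$ into a positive increasing power and a nonnegative increasing normalized function. The paper uses the same step inside its inductive proof (``$M_d(x)/x^d$ is increasing \ldots{} In particular, $M_d(x)$ is increasing''), and your explicit treatment of $n=1$ and of the endpoint $x=1$ via $M_n(1)=0$ simply fills in details the paper leaves implicit.
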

\begin{rem}
    By an identical argument, we can show a more general result. Let $\varepsilon: \N \to \{0, 1, -1\}$ be a multiplicative function such that $\epsilon(1)=1$. Let 
    \[ M_{n, \varepsilon}(x) = \sum_{d \mid n} \epsilon(n/d)x^d.\]
    Then $M_{n, \varepsilon}(x)$ is increasing on $[1, \infty).$ We note that when $\epsilon$ is the Mobius function, $M_{n, \varepsilon}(x)$ is precisely $M_n(x).$
\end{rem}

We discuss below two rather surprising consequences of \cref{thm:main}.

\begin{rem}
Let $q$ be a prime power. Recall that $M_n(q)$ is the number of monic irreducible polynomials of degree $n$ in $\mathbb{F}_q[x]$, while $q^n$ is the number of monic polynomials of degree $n$ in $\mathbb{F}_q[x]$. \cref{thm:main} implies that, for fixed $n$, the proportion of degree-$n$ polynomials that are irreducible, $M_n(q)/q^n$, is an increasing function of $q$. 
\end{rem}

\begin{rem}
Let $x\ge 2$ be a positive integer, and consider coloring a necklace of $n$ beads with $x$ colors. There are $x^n$ colorings in total. The number of aperiodic colorings is $nM_n(x)$ (here we do not consider colorings up to rotational equivalence). Theorem \ref{thm:main} implies that, if we randomly choose a necklace, the probability that it is aperiodic increases with the number of colors $x$. Intuitively, adding more colors makes nontrivial repetition less likely.

\end{rem}

\begin{rem}

One may wonder about the behavior of $M_n(x)$ on the interval $[-\infty, 1].$ We note that 
   \[M_n(1) = (1/n)\sum_{d | n}  \mu(d) = \begin{cases}
    1 & \text{if } n = 1,\\
    0 & \text{if } n >1. 
\end{cases}\]
Fix $n > 1$. Taking derivative with respect to $x$ gives:
\[ M_n'(x) = \frac{d}{dx} \left( \frac{1}{n} \sum_{d \mid n} \mu(d) x^{\frac{n}{d}} \right) = \frac{1}{n} \sum_{d \mid n} \mu(d) \left( \frac{n}{d} \right) x^{\frac{n}{d} - 1},\]
\[ M_n'(x) = \sum_{d \mid n} \frac{\mu(d)}{d} x^{\frac{n}{d} - 1} .\]
Now, evaluate the derivative at $x = 1$, we have 
\[M_n'(1) = \sum_{d \mid n} \frac{\mu(d)}{d} = \frac{\varphi(n)}{n}, \]
 where $\phi(n)$ is Euler's totient function. In particular, $M_n'(1) >0$ because $n >1$.  By continuity, $M_n'(x)$ is positive in a small interval around $1$. This implies that $M_n(x)$ is a locally increasing function around $x=1$ and vanishes $x=1$. We conclude that for any $n >1$ there is an $\epsilon >0 $ such that $M_n(x) < 0 $ in $(\epsilon, 1)$. 
 
\end{rem}

We now explain how to generalize the above argument to study the monotonicity of $M_n'(x)$ and higher derivatives of $M_n(x)$.  Instead of studying $M_n'(x)$ directly, we introduce the following variant. 
\begin{definition}
For each $n \geq 1$
\[ P_n = x \dfrac{d}{dx} M_n(x)=x M_n'(x). \] 
\end{definition}
For a polynomial $f = \sum_{i=0}^k a_k x^k$, we can see that 
\[ x \dfrac{d}{dx}(f) = \sum_{i=0}^{k} (ka_k)x^k.\]
In particular, we have the following recursive formula for $P_n$ 
\[ P_n(x) = \frac{1}{n} \sum_{d \mid n} \mu(d) \frac{n}{d} x^{n/d}.\]

\begin{lem}
    The polynomial $P_n$ satisfies the following relations 
    \[ P_n(xy)= \sum_{\lcm(i,j)=n} \gcd(i,j) P_i(x)M_j(y).\]
    Additionally, if $p$ is a prime number, then 
\begin{equation*} \label{eq:recrusive-Pn}
P_{pd}(x) =
\begin{cases}
    \frac{1}{p} (pP_d(x^p)-P_d(x)) & \text{if } p \nmid d, \\
    pP_d(x^p) & \text{if } p \mid d.
\end{cases}
\end{equation*}
\end{lem}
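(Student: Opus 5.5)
The plan is to obtain everything by applying the Euler operator $\theta := x\frac{d}{dx}$ to the identities already established for $M_n$, namely the product rule \cref{eq:product_rule} and the recursion \cref{eq:recrusive}. The one fact needed about $\theta$ is the chain rule: for a polynomial $F$ and a constant $c$ (or a power $x^p$), differentiating a composed argument gives a clean answer in terms of $\theta F$.

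\textbf{Product formula.} I will fix $y$ and regard both sides of \cref{eq:product_rule} as polynomials in $x$. For the left-hand side, the chain rule gives
\[ x\frac{\partial}{\partial x} M_n(xy) = x\,y\,M_n'(xy) = (xy)M_n'(xy) = P_n(xy). \]
On the right-hand side, $M_j(y)$ does not depend on $x$, so $\theta$ acts only on $M_i(x)$ and turns it into $P_i(x)$. This yields
\[ P_n(xy)=\sum_{\lcm(i,j)=n}\gcd(i,j)P_i(x)M_j(y) \]
directly. As a sanity check I would specialise to $y=1$: since $M_j(1)=0$ for $j>1$, only the term $j=1$, $i=n$ survives, and it gives the tautology $P_n(x)=P_n(x)$.

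\textbf{Recursions.} Here the key computation is
\[ \theta\bigl(M_d(x^p)\bigr) = x\cdot p x^{p-1}M_d'(x^p) = p\,x^pM_d'(x^p) = p\,P_d(x^p). \]
\begin{itemize}
\item If $p\nmid d$, applying $\theta$ to $M_{pd}=\frac1p\bigl(M_d(x^p)-M_d(x)\bigr)$ gives $P_{pd}(x)=\frac1p\bigl(pP_d(x^p)-P_d(x)\bigr)$, exactly as stated.
\item If $p\mid d$, applying $\theta$ to $M_{pd}=\frac1pM_d(x^p)$ gives $P_{pd}(x)=\frac1p\cdot pP_d(x^p)=P_d(x^p)$.
\end{itemize}

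I would cross-check both cases against the explicit formula $P_n(x)=\sum_{e\mid n}\frac{\mu(e)}{e}x^{n/e}$, by redoing the divisor bookkeeping from the proof of \cref{eq:recrusive} with the extra weight $1/e$:
\begin{itemize}
\item If $p\mid d$, the squarefree divisors of $pd$ and of $d$ coincide.
\item If $p\nmid d$, the divisors of $pd$ split as $f$ and $pf$ with $f\mid d$, and the terms with $pf$ contribute the factor $-\frac1p$.
\end{itemize}

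\textbf{Expected obstacle.} The only delicate point is the constant in the case $p\mid d$. My derivation gives $P_d(x^p)$, not $pP_d(x^p)$. For example, with $n=4$ one has $P_4=x^4-\tfrac12x^2=P_2(x^2)$, whereas $2P_2(x^2)=2x^4-x^2$. So the printed factor $p$ looks like a typo, and in the proof I would state and verify the corrected form $P_{pd}(x)=P_d(x^p)$. Apart from this, everything is routine differentiation.
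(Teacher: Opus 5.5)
Your proof is correct and follows the paper's own argument: apply $x\frac{d}{dx}$ to \cref{eq:product_rule} and to \cref{eq:recrusive}, using $x\frac{d}{dx}M_n(xy)=P_n(xy)$ and $x\frac{d}{dx}f(x^p)=px^pf'(x^p)$. You are also right that this computation gives $P_{pd}(x)=P_d(x^p)$, not $pP_d(x^p)$, when $p\mid d$ (your check $P_4=x^4-\tfrac12x^2=P_2(x^2)$ confirms it), so the printed factor $p$ is a typo; it does no harm in \cref{thm:main_derivative}, which only uses positivity and monotonicity in that case.
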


\begin{proof}
The first equality is obtained by taking  $x \dfrac{d}{dx}$ to the following equation 
\[ M_n(xy)= \sum_{\lcm(i,j)=n} \gcd(i,j) M_i(x)M_j(y),\]
and note that $x \dfrac{d}{dx} M_n(xy) = P_n(xy).$ 

The second relation is obtained by applying $x \dfrac{d}{dx}$ to \cref{eq:recrusive} and note that 
\[ x \frac{d}{dx} f(x^p) = p x^p f'(x^p).
\qedhere\]
\end{proof}
We are now ready to show that $P_n$ satisfies monotonicity properties analogous to those of $M_n$ described in \cref{thm:main}.
\begin{thm} \label{thm:main_derivative}
  Let $n \geq 2$ be a positive integer. Then 
    \begin{enumerate}
        \item $P_n(x)>0$ for $x \in [1, \infty).$
        \item $P_n(x)/x^n$ is an increasing function on $[1, \infty).$
    \end{enumerate}
\end{thm}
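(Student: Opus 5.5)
We follow the second, non-inductive proof of \cref{thm:main}: normalize, differentiate, substitute $x=e^t$, and expand in a Taylor series whose coefficients are Jordan totient expressions. Write
\[ P_n(x)=\sum_{d\mid n}\frac{\mu(d)}{d}x^{n/d}=\frac1n\sum_{m\mid n}\mu(n/m)\,m\,x^m, \]
using $m=n/d$. We prove statement (2) first and then deduce statement (1) from it.

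\textbf{Monotonicity of $P_n(x)/x^n$.} Set $g_n(x)=P_n(x)/x^n$. A direct computation gives
\[ n x^{n+1} g_n'(x)=\sum_{m\mid n}\mu(n/m)\,m(m-n)\,x^m=:\widetilde Q_n(x). \]
It suffices to show $\widetilde Q_n(x)>0$ for $x\ge 1$. Put $x=e^t$ with $t\ge 0$, expand each $e^{mt}$ in its Taylor series, and exchange the finite and infinite sums. This gives
\[ \widetilde Q_n(e^t)=\sum_{k\ge 0}\frac{t^k}{k!}\,D_k,\qquad D_k=\sum_{m\mid n}\mu(n/m)\bigl(m^{k+2}-n\,m^{k+1}\bigr)=J_{k+2}(n)-n\,J_{k+1}(n). \]

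\textbf{Positivity of the coefficients.} Using the product formula for Jordan's totient,
\[ D_k=n^{k+2}\Bigl[\prod_{p\mid n}\bigl(1-p^{-(k+2)}\bigr)-\prod_{p\mid n}\bigl(1-p^{-(k+1)}\bigr)\Bigr]. \]
Since $k+1\ge 1$, every factor $1-p^{-(k+1)}$ is strictly positive. Each factor in the first product strictly exceeds the corresponding factor in the second, and $n\ge 2$ guarantees at least one prime divisor. Hence $D_k>0$ for every $k\ge 0$.

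This is simpler than in \cref{thm:main}: there the $k=0$ coefficient involved $J_0(n)=0$ and needed separate treatment, whereas here all indices are at least $1$. In particular $D_0=J_2(n)-n\varphi(n)>0$. So the series has strictly positive coefficients, and $\widetilde Q_n(e^t)\ge D_0>0$ for all $t\ge 0$. Therefore $g_n$ is strictly increasing on $[1,\infty)$, which is statement (2).

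\textbf{Positivity of $P_n$.} At $x=1$ we have $P_n(1)=\sum_{d\mid n}\mu(d)/d=\varphi(n)/n>0$, as computed in the remark on the behavior near $x=1$. By monotonicity, $P_n(x)\ge x^nP_n(1)>0$ for all $x\ge 1$, which is statement (1).

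\textbf{Main obstacle and alternative route.} The only delicate points are checking that the derivative computation yields exactly $J_{k+2}-nJ_{k+1}$ (the extra factor $m$ shifts the Jordan indices by one) and that interchanging the summations is legitimate (finite sum of absolutely convergent series). One could instead mimic the inductive proof, using the lemma $P_n(xy)=\sum_{\lcm(i,j)=n}\gcd(i,j)P_i(x)M_j(y)$ and dropping the terms with $i\ne n$. However, that route would need positivity of $M_j$ at $y=\alpha$ together with careful control of which factor carries the dilation. So we would use the Taylor-series argument as the main proof.
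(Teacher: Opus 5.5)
Your proof is correct, but it is not the route the paper takes for this theorem. The paper argues by induction on $n$:
\begin{itemize}
\item Part (1) comes from the recursion $P_{pd}(x)=P_d(x^p)-\frac1p P_d(x)$ for $p\nmid d$, together with monotonicity of $P_d$ from the induction hypothesis.
\item Part (2) is exactly the alternative you set aside. The paper expands $P_n(\alpha x_1)=\sum_{\lcm(i,j)=n}\gcd(i,j)P_i(x_1)M_j(\alpha)$ and keeps the $i=n$ terms, which sum to $\alpha^nP_n(x_1)$. It discards the rest, which are positive by induction for $P_i$ and by \cref{thm:main} for $M_j(\alpha)$, $\alpha>1$.
\end{itemize}

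You instead lift the Jordan-totient proof of \cref{thm:main} one level. The extra factor $m$ shifts the indices to $J_{k+2}(n)-nJ_{k+1}(n)$, so every product factor is positive and no special $k=0$ case arises. You get (2) directly, and then (1) follows from $P_n(1)=\varphi(n)/n$.

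Your route has two advantages. First, it is self-contained: it uses neither \cref{thm:main} nor the product and recursion identities for $P_n$. (Incidentally, the paper's $p\mid d$ case of that recursion should read $P_{pd}(x)=P_d(x^p)$, not $pP_d(x^p)$; this slip is harmless for the argument.) Second, it gives a stronger conclusion: $\widetilde Q_n(e^t)$ has all Taylor coefficients strictly positive. In fact your argument is precisely the $k=1$ case of the paper's later sketched proof of \cref{thm:higher_derivative}, with $C_{n,1}(r)=D_r/n$.

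The paper's route has a different advantage: it extends via \cref{lem:general_product_rule} to the higher $M_n^{[k]}$. There $(m)_k$ expands into Jordan totients with alternating (Stirling) signs, so your direct sign argument no longer closes.
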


\begin{proof}
    We will prove both of these statements simultaneously by induction. For $n=2$, $P_2(x)=\frac{1}{2}(2x^2-x)$ so these statements hold. 

    Let us assume that both statements hold for all $k<n.$ Let us show that it is also true for $n.$ Let us first prove the first statement. If $x=1$, then 
    \[ P_n(1) = \frac{1}{n} \sum_{d \mid n} \mu(d)  \frac{n}{d}=\frac{\varphi(n)}{n}>0. \]
    Let us consider the case $x >1.$ and let $p$ be a prime divisor of $n$. Let $d=n/p$. Then $P_n(x)=P_{pd}(x).$ If $p \mid d$ then $P_{n}(x)=pP_d(x^p)$. We can check directly that both the above properties hold. On the other hand, if $p \nmid d$, then using the fact that $x^p>x$ and $P_d(x)$ is increasing, we have
    \[ P_n(x)=\frac{1}{p}(pP_d(x^p)-P_d(x))>\frac{1}{p}(pP_d(x)-P_d(x))=\frac{p-1}{p} P_d(x)>0.\]
This proves the first statement. Let us now prove the second statement using a similar strategy outlined in the proof of \cref{thm:main}. Let $x_2>x_1$. We will show that $P_n(x_2)/x_2^n > P_n(x_1)/x_1^n$. First, let us write $x_2= \alpha x_1$ where $\alpha >1.$ Then 
        \[ P_n(x_2)=P_n(x_1 \alpha )= \sum_{\lcm(i,j)=n} \gcd(i,j) P_i(x_1)M_j(\alpha). \] 
        By induction and the first part, all terms in the above sums are positive. Filtering out the terms with $i=n$ and noticing that $P_i(x)$ and $M_i(x)$ are positive for $1 \leq i \leq n$ and $x >1$,  we conclude that 
        \[ P_n(x_2)=P_n(x_1\alpha)>\sum_{j \mid n} \gcd(j,n) M_j(\alpha) P_n(x_1) =\left(\sum_{j \mid n} j M_j(\alpha) \right)  P_n(x_1)=\alpha^nP_n(x).
        \qedhere\]
\end{proof}

Here is an immediate corollary. 
\begin{cor}
Let $n \geq 2.$ Then, $M_n'(x)/x^{n-1}$ is an increasing function on $[1, \infty).$  In particular, $M_n'(x)$ is an increasing function on $[1, \infty).$
\end{cor}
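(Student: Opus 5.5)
Since $P_n(x) = xM_n'(x)$, we have
\[ \frac{M_n'(x)}{x^{n-1}} = \frac{x M_n'(x)}{x^{n}} = \frac{P_n(x)}{x^n}. \]
The first claim is therefore exactly part (2) of \cref{thm:main_derivative}, read through this identity. The identity holds for every $x \geq 1$ because $x \neq 0$ there. Consequently $M_n'(x)/x^{n-1}$ is increasing on $[1,\infty)$, strictly so, since the proof of \cref{thm:main_derivative} gives strict inequalities.

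For the second claim, write
\[ M_n'(x) = x^{n-1}\cdot \frac{P_n(x)}{x^n}. \]
The first factor $x^{n-1}$ is positive and nondecreasing on $[1,\infty)$, because $n \geq 2$. The second factor is positive by part (1) of \cref{thm:main_derivative}, and it is increasing by part (2). A product of two positive functions, one nondecreasing and one increasing, is increasing. Concretely, for $1 \le x_1 < x_2$,
\[ x_2^{n-1}\frac{P_n(x_2)}{x_2^n} > x_2^{n-1}\frac{P_n(x_1)}{x_1^n} \ge x_1^{n-1}\frac{P_n(x_1)}{x_1^n}. \]
The first inequality uses that the ratio is increasing together with $x_2^{n-1} > 0$. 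The second uses that the ratio at $x_1$ is positive and that $x_2^{n-1} \ge x_1^{n-1}$. Hence $M_n'(x_2) > M_n'(x_1)$.

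Nothing here is a real obstacle. The only point needing care is the positivity from part (1): without it, multiplying by the increasing factor $x^{n-1}$ could reverse the monotonicity. Since part (1) already gives $P_n > 0$ on $[1,\infty)$, the argument is complete.
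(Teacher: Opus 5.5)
Your proposal is correct and follows the same route as the paper, which presents this as an immediate corollary of \cref{thm:main_derivative}. The identity $M_n'(x)/x^{n-1}=P_n(x)/x^n$ together with part (2) gives the first claim, and writing $M_n'(x)=x^{n-1}\cdot P_n(x)/x^n$ as a product of positive increasing factors, with positivity coming from part (1), gives the second, exactly as you argue.
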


We now generalize \cref{thm:main_derivative} to higher derivatives of $M_n(x).$ First, we discuss a weaker version, which we will need later on. 

\begin{lem} \label{lem:higher-deriviative-zeta}
For each $1 \leq k \leq  n$, $M_n^{(k)}(x)>0$ on $[1, \infty).$
\end{lem}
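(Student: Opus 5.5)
The plan is to reduce the lemma to a statement about Taylor coefficients at $x=1$. Write $x=1+u$ and expand
\[ M_n(1+u)=\sum_{j=0}^{n} c_j(n)\,u^j, \qquad c_j(n)=\frac{M_n^{(j)}(1)}{j!}=\frac1n\sum_{d\mid n}\mu(d)\binom{n/d}{j}. \]
Then for $x\ge 1$,
\[ M_n^{(k)}(x)=\sum_{j=k}^{n} \frac{j!}{(j-k)!}\,c_j(n)\,(x-1)^{j-k}. \]
So it suffices to show the following claim: $c_j(n)\ge 0$ for all $1\le j\le n$, with strict inequality $c_k(n)>0$ for every $1\le k\le n$. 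Given the claim, the $j=k$ term is strictly positive and all other terms are nonnegative on $[1,\infty)$, which gives $M_n^{(k)}(x)>0$ there. Note that $c_0(n)=M_n(1)$ plays no role, since $k\ge1$.

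I will prove the claim, that $c_k(n)>0$ for $1\le k\le n$, by strong induction on $n$, using the recursion \cref{eq:recrusive}. For the base case $n=1$ we have $M_1(1+u)=1+u$, so $c_1(1)=1>0$. For $n\ge 2$, write $n=pd$ with $p$ prime, and put $v=(1+u)^p-1=\sum_{i=1}^{p}\binom{p}{i}u^i$. Then $M_d(x^p)=M_d(1+v)=\sum_j c_j(d)v^j$.

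\textbf{Case $p\mid d$.} Here $M_n(1+u)=\frac1p\sum_{j} c_j(d)\,v^j$. Each $v^j$ has nonnegative coefficients, and the coefficient of $u^k$ in $v^j$ is strictly positive exactly when $j\le k\le pj$. For $1\le k\le n=pd$, choose $j=\lceil k/p\rceil$, which satisfies $1\le j\le d$. By induction $c_j(d)>0$, so $c_k(n)>0$. The $j=0$ term contributes only to $u^0$, and the other terms with $j\ge1$ are nonnegative by induction.

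\textbf{Case $p\nmid d$.} Here
\[ M_n(1+u)=\frac1p\sum_{j}c_j(d)\,(v^j-u^j). \]
The $j=0$ terms cancel. Since $v-u$ has nonnegative coefficients, $v^j-u^j=(v-u)(v^{j-1}+\dots+u^{j-1})$ has nonnegative coefficients as well, so every coefficient of $M_n(1+u)$ in positive degree is nonnegative. For strictness, fix $1\le k\le n$ and again take $j=\lceil k/p\rceil\in[1,d]$. If $k>j$, then $u^j$ contributes nothing to $u^k$, while the coefficient of $u^k$ in $v^j$ is positive. If $k=j$, the coefficient of $u^k$ in $v^j-u^j$ is $p^j-1>0$. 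In either case $c_j(d)>0$ by induction, which yields $c_k(n)>0$.

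The main obstacle is this strictness bookkeeping: we need a single index $j\le d$ with $c_j(d)>0$ whose contribution survives the subtraction of $M_d(x)$. The choice $j=\lceil k/p\rceil$ together with $p\ge2$ (so that $p^j>1$) handles it. A purely analytic attack, namely bounding $\sum_{m\mid n,\,m<n}(m)_k$ by the leading falling factorial $(n)_k$, fails already for $k=1$ and large $n$. That is why I would work with the coefficient-positivity induction instead.
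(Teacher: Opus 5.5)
Your proof is correct, and it takes a genuinely different route from the paper's.

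\textbf{The paper's route.} The paper splits by $k$. The case $k=1$ is imported from \cref{thm:main_derivative}: positivity of $P_n=xM_n'$, whose inductive proof uses the recursion for $P_n$ together with the product formula \eqref{eq:product_rule}. For $k\ge 2$, it writes $M_n^{(k)}(x)=\sum_i a_i(i)_k x^{i-k}$ and uses $(i)_k/(n)_k\le (i/n)^k$ and $x^{i-n}\le 1$. This bounds all non-leading terms, relative to the leading one, by $\sum_{d\ge 2}d^{-k}\le\zeta(2)-1<1$.

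\textbf{Your route.} You observe that, by Taylor expansion at $1$, the lemma (for all $k$ at once) is equivalent to its special case $x=1$, i.e.\ to $c_k(n)=M_n^{(k)}(1)/k!>0$ for $1\le k\le n$. You then prove this by strong induction using only the prime recursion \eqref{eq:recrusive} and the substitution $x^p=1+v$. Your choice $j=\lceil k/p\rceil$ handles strictness correctly in both cases, with $p^j-1>0$ covering $k=j=1$.

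\textbf{What each buys.} Your argument is uniform in $k$ and independent of \eqref{eq:product_rule} and \cref{thm:main_derivative}. It also proves more: $M_n(x)-M_n(1)$ is a polynomial in $x-1$ with strictly positive coefficients, so every $M_n^{(k)}$ with $1\le k\le n$ is both positive and increasing on $[1,\infty)$. Since $e^t-1$ has positive Taylor coefficients, it further yields the nonnegativity of the numbers $A_{n,k}(r)$. This is item H2 in the paper's second proof of \cref{thm:higher_derivative}, which the paper leaves to the reader. The paper's estimate, by contrast, gives an explicit lower bound $M_n^{(k)}(x)\ge \frac1n(2-\zeta(2))\,x^{n-k}$ for $k\ge2$, which captures the growth rate and uses only dominance of the leading coefficient.

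Your remark that the crude analytic estimate breaks down at $k=1$ is accurate; that is exactly why the paper treats $k=1$ separately. It does, however, succeed for every $k\ge 2$.
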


\begin{proof}
The case $k=1$ is proved above. Let us assume that $k \geq 2.$ Let $(i)_{k}=i(i-1) \ldots(i-k+1)$ be the permutation number. We note that if $i<k$, then $(i)_{k}=0.$ Let $M_n(x)=\sum_{i=0}^n a_i x^i$. Note that $a_i=0$ unless $i \mid n.$ We then have 
    \[ M_n^{(k)}(x)= \sum_{i=0}^n a_i (i)_kx^{i-k} = \sum_{i \geq k} a_i (i)_{k} x^{i-k} = a_n (n)_k x^{n-k} \sum_{i \geq k} \dfrac{a_i (i)_{k}}{a_n (n)_{k}} x^{i-n} .\]
For each $0 \leq j \leq k-1$ we note that $\frac{i-j}{n-j} \leq \frac{i}{n}.$ Therefore 
\[ \left|\frac{a_i (i)_{k}}{a_n (n)_{k}} \right| \leq \left|\frac{a_i}{a_n} \right| \left(\frac{i}{n} \right)^k.  \] 

We conclude that for $x \geq 1$
\[ M_n^{(k)}(x) \geq  \frac{1}{n}x^{n-k} \left(1- \sum_{d \mid n, d<n} \frac{1}{d^k} x^{n/d-n} \right) \geq \frac{1}{n}x^{n-k} \left(2-\sum_{d \mid n} \frac{1}{d^k} \right). \]
We have 
\[ \sum_{d \mid n} \frac{1}{d^k} \leq \sum_{d \geq 1} \frac{1}{d^2} = \zeta(2)=\frac{\pi^2}{6} <2.\]
We conclude that $M_n^{(k)}(x) >0$ for all $x \geq 1.$
\end{proof}

We are now ready to state the stronger monotonicity property of the higher derivative of $M_n(x).$ As before, we introduce a variant of $M_n^{(k)}(x)$, which has the benefit of satisfying a combinatorial property similar to \cref{eq:product_rule}. For each $k \geq 0$, we define 
\[ M_n^{[k]}(x)= x^k M_{n}^{(k)}(x) = \left(x^k \dfrac{d^k}{dx^k} \right)M_n(x).\]
We remark that by definition, if $k>n$, then $M_n^{[k]}(x)=0.$ Additionally, $M_n^{[0]}(x)=M_n(x)$ and $M_n^{[1]}(x)=P_n(x).$ By applying $x^k \dfrac{d^k}{dx^k}$ to both sides of \cref{eq:product_rule}, we have the following relation. 
\begin{lem} \label{lem:general_product_rule}
    For every $k$ and $n$
    \[ M_n^{[k]}(xy) = \sum_{\lcm(i,j)=n} \gcd(i,j) M_i^{[k]}(x)M_j(y).\]
\end{lem}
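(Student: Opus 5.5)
The plan is to apply the operator $x^k \frac{d^k}{dx^k}$ in the variable $x$, with $y$ held fixed, to both sides of \cref{eq:product_rule}. The only thing to verify is that this operator behaves well under the substitution $x \mapsto xy$. Concretely, I will establish the scaling identity
\[ x^k \frac{d^k}{dx^k}\bigl[f(xy)\bigr] = (xy)^k f^{(k)}(xy) = f^{[k]}(xy) \]
for any polynomial $f$, where $f^{[k]}(z) := z^k f^{(k)}(z)$.

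For this identity, the chain rule applied $k$ times gives $\frac{d^k}{dx^k} f(xy) = y^k f^{(k)}(xy)$. Multiplying by $x^k$ then produces $(xy)^k f^{(k)}(xy)$. Alternatively, one can check the identity on monomials: for $f = z^m$ both sides equal $(m)_k (xy)^m$, where $(m)_k$ is the falling factorial. The identity then extends to all polynomials by linearity.

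Applying the identity with $f = M_n$ turns the left-hand side of \cref{eq:product_rule} into $M_n^{[k]}(xy)$. On the right-hand side, the operator is linear and acts only on the $x$-variable. Each summand $\gcd(i,j) M_i(x) M_j(y)$ is therefore sent to $\gcd(i,j) M_i^{[k]}(x) M_j(y)$, because $M_j(y)$ and $\gcd(i,j)$ are constants with respect to $x$. Equating the two sides gives the claimed formula.

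No genuine obstacle is expected. The only care needed is to confirm that the case $k > n$ is consistent: there both sides vanish, since every $i$ with $\lcm(i,j) = n$ satisfies $i \le n < k$, so $M_i^{[k]} = 0$. The case $k = 0$ is exactly \cref{eq:product_rule}.
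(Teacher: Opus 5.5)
Your proposal is correct and matches the paper's proof, which also obtains the lemma by applying $x^k \frac{d^k}{dx^k}$ in the variable $x$ to both sides of \cref{eq:product_rule}. Your explicit check of the scaling identity $x^k\frac{d^k}{dx^k}[f(xy)] = (xy)^k f^{(k)}(xy)$ spells out the one step the paper leaves implicit.
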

We are now ready to prove the statement regarding the monotonicity of the higher derivatives of $M_n(x).$

\begin{thm} \label{thm:higher_derivative}
Let $p$ be the smallest prime divisor of $n$. If $n/p<k \leq n$, then $M_n^{[k]}(x)/x^n$ is a constant. Otherwise, if $0 \leq k \leq n/p$, $M_n^{[k]}(x)/x^{n}$ is an increasing function on $[1, \infty).$
\end{thm}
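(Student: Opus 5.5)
The plan starts from the explicit coefficient form. Write
\[ M_n^{[k]}(x)=\frac1n\sum_{d\mid n}\mu(d)\,(n/d)_k\,x^{n/d}, \]
where $(m)_k=m(m-1)\cdots(m-k+1)$. If $d>1$ divides $n$, then $n/d\le n/p$. So when $k>n/p$ every term with $d>1$ vanishes, because $(n/d)_k=0$ for $n/d<k$. Then $M_n^{[k]}(x)=\frac1n (n)_k x^n$, and $M_n^{[k]}(x)/x^n$ is the constant $(n)_k/n$. This settles the first case.

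For $0\le k\le n/p$ I will mimic the inductive proofs of \cref{thm:main} and \cref{thm:main_derivative}, using the product rule of \cref{lem:general_product_rule}. The ingredients are the following.
\begin{enumerate}
\item $M_j(\alpha)>0$ for $\alpha>1$ and $j\ge 2$ by \cref{thm:main}, and $M_1(\alpha)=\alpha>0$.
\item $M_i^{[k]}(x_1)\ge 0$ for $x_1\ge1$ and every $i$. This follows from \cref{lem:higher-deriviative-zeta} when $k\le i$, since then $M_i^{(k)}>0$ on $[1,\infty)$. When $k>i$ we have $M_i^{[k]}=0$.
\item $M_n^{[k]}(x_1)>0$, again by \cref{lem:higher-deriviative-zeta}.
\end{enumerate}
No induction on $n$ is then needed. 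Take $x_2=\alpha x_1$ with $\alpha>1$ and $x_1\ge1$. The terms of \cref{lem:general_product_rule} with $i=n$ have $\lcm(n,j)=n$ for every $j\mid n$ and $\gcd(n,j)=j$. Together they give
\[ \Bigl(\sum_{j\mid n} jM_j(\alpha)\Bigr)M_n^{[k]}(x_1)=\alpha^n M_n^{[k]}(x_1). \]
All remaining terms are nonnegative, so $M_n^{[k]}(x_2)\ge \alpha^n M_n^{[k]}(x_1)$. This is the non-strict monotonicity of $M_n^{[k]}(x)/x^n$.

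The main obstacle is strictness, and this is exactly where the hypothesis $k\le n/p$ should enter. I need one remaining term $(i,j)$ with $i<n$, $\lcm(i,j)=n$ and $M_i^{[k]}(x_1)M_j(\alpha)>0$. I will take $i=n/p$ and $j=n$. Then $\lcm(i,j)=n$, $M_n(\alpha)>0$, and $i=n/p\ge k$, so $M_{n/p}^{[k]}(x_1)>0$ by \cref{lem:higher-deriviative-zeta} (or trivially if $k=0$, by \cref{thm:main}). A minor point is that \cref{lem:higher-deriviative-zeta} is stated for $1\le k\le n$, with the index being the polynomial's own degree. Here it is applied to $M_{n/p}$ with $k\le n/p$, which is exactly its range.

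One degenerate case must be handled: $n/p=1$, i.e.\ $n$ prime. If $k=1$, then $M_1^{[1]}=x>0$ and the argument is fine. If $k=0$, this is \cref{thm:main}. Finally, the statement about $M_n^{(k)}(x)/x^{n-k}$ from the introduction follows at once, since $M_n^{[k]}(x)/x^n = M_n^{(k)}(x)/x^{n-k}$.
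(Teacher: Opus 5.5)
Your proposal is correct and is essentially the paper's own argument: expand $M_n^{[k]}(\alpha x_1)$ with \cref{lem:general_product_rule}, collect the $i=n$ terms into $\alpha^n M_n^{[k]}(x_1)$, use \cref{lem:higher-deriviative-zeta} for nonnegativity of the remaining terms, and get strictness from the pair $(n/p,n)$, which is exactly where $k\le n/p$ is used. Your handling of the ranges is cleaner than the paper's; the only nit is that for $k=0$, $x_1=1$ and $n/p\ge 2$ the term $M_{n/p}(1)$ vanishes, which is harmless since the case $k=0$ is just \cref{thm:main}.
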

\begin{proof}
Suppose that $n/p < k \leq n.$ Then, for each $d \mid n$ such that $d \neq n$, $P(n/d,k)=0.$ Therefore 
$M_n^{[k]}(x)=\frac{P(n,k)}{n} x^n.$ and hence $M_n^{[k]}(x)/x^n = \frac{P(n,k)}{n}.$

Let us consider the case $n/p \leq k \leq n-1.$ Let $x_2>x_1>1.$ Let us write $x_2 = \alpha x_1$, then $\alpha =x_2/x_1>1.$ By \cref{lem:general_product_rule}, we have
\[ M_n^{[k]}(x_2) =M_n^{[k]}(x_1 \alpha)= \sum_{\lcm(i,j)=n} \gcd(i,j) M_i^{[k]}(x_1)M_j(\alpha).\]
As shown in \cref{lem:higher-deriviative-zeta}, $M_i^{[k]}(x) \geq 0$. Additionally, $M_{n/p}^{[k]}(x_1)>0$ and $M_n(\alpha)>0$. Therefore, by filtering out the pair $(i,j)=(n/p,n)$ together with terms with $i=n$ and $j \mid n$, in the above equation, we have 
\begin{align*}
 M_n^{[k]}(x_2) = M_n^{[k]}(x_1 \alpha) & \geq M_{n/p}^{[k]}(x_1) M_{n}(\alpha) + \left(\sum_{j \mid n} \gcd(j,n)M_j(\alpha) M_n^{[k]}(x_1) \right) \\
&> \sum_{j \mid n} \gcd(j,n)M_j(\alpha) M_n^{[k]}(x_1) = M_n^{[k]}(x) \sum_{j \mid n} j M_j(\alpha)= \alpha^n M_n^{[k]}(x_1). 
\end{align*}

This shows that $M_n^{[k]}(x_2)/x_2^n > M_n^{[k]}(x_1)/x_1^n.$ By definition, $M_n^{[k]}(x)/x^n$ is an increasing function. 
\end{proof}

Since $M_n^{(k)}(x)=\frac{M_n^{[k]}(x)}{x^k}$, we have the following corollary. 

\begin{cor}
    Under the same notation as in \cref{thm:higher_derivative}, $M_{n}^{(k)}(x)/x^{n-k}$ is an increasing function on $[1, \infty)$ for each $k$ such that $0 \leq k \leq n/p.$
\end{cor}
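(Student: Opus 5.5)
The corollary follows from \cref{thm:higher_derivative} by a purely algebraic rewriting. By definition $M_n^{[k]}(x)=x^kM_n^{(k)}(x)$, so for every $x\ge 1$
\[
\frac{M_n^{(k)}(x)}{x^{n-k}}=\frac{x^{-k}M_n^{[k]}(x)}{x^{n-k}}=\frac{M_n^{[k]}(x)}{x^{n}}.
\]
The function in the corollary is therefore the same function on $[1,\infty)$ as the one treated in \cref{thm:higher_derivative}. It remains only to check that the index ranges match.

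For $0\le k\le n/p$, \cref{thm:higher_derivative} says $M_n^{[k]}(x)/x^n$ is increasing on $[1,\infty)$, and the displayed identity transfers this directly. The boundary cases need a small remark.

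\begin{itemize}
\item For $k=0$ the statement is \cref{thm:main}(2).
\item For $k=1$ it is \cref{thm:main_derivative}(2), since $P_n=M_n^{[1]}$.
\end{itemize}

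For completeness, one can also record what happens when $n/p<k\le n$. Then $M_n^{[k]}(x)=\frac{(n)_k}{n}x^n$, because $(n/d)_k=0$ whenever $d>1$, as $n/d\le n/p<k$. Hence $M_n^{(k)}(x)/x^{n-k}$ is the constant $(n)_k/n$, which is the first item of the introductory theorem.

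The only point needing care is the overlap $k=n/p$ in the proof of \cref{thm:higher_derivative}. That proof's strict inequality uses the filtered term $M_{n/p}^{[k]}(x_1)M_n(\alpha)$, and we need this term to be positive when $k=n/p$. Its positivity follows from \cref{lem:higher-deriviative-zeta} applied to $M_{n/p}$ with $k=n/p$, together with $M_n(\alpha)>0$ for $\alpha>1$ from \cref{thm:main}(1). With that, the increase is strict on $(1,\infty)$, and monotonicity extends to the closed interval $[1,\infty)$ by continuity.

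I expect no real obstacle. The corollary is a one-line consequence once the identity $M_n^{(k)}/x^{n-k}=M_n^{[k]}/x^n$ is written down.
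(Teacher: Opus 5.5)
Your proposal is correct and matches the paper's argument. The paper likewise deduces the corollary directly from \cref{thm:higher_derivative} via the identity $M_n^{(k)}(x)/x^{n-k}=M_n^{[k]}(x)/x^{n}$ on $[1,\infty)$. Your additional checks are accurate but go beyond what the paper states at this point: the cases $k=0,1$, positivity of the filtered term $M_{n/p}^{[k]}(x_1)M_n(\alpha)$ at $k=n/p$ via \cref{lem:higher-deriviative-zeta}, the constant case $k>n/p$, and the continuity extension to $x=1$.
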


We provide below another proof for \cref{thm:higher_derivative}.  We first observe that the second proof of \cref{thm:main} proves more than
the monotonicity of $ f_n(x) := \frac{M_{n}(x)}{x^{n}}$, 
 $n\ge2$, on $[1,\infty)$.
It shows that $R_{n}(t)=nx^{n+1}f_n^{\prime}(e^{t})$ is not only a positive function on $(0,\infty)$ but also that all its Taylor coefficients $C_{k}=J_{k+1}(n) - n J_k(n)$ in $R_{n}(t) = \sum C_k \frac{t^{k}}{k!}$ are positive for all $k \ge 0$.
This is a remarkable result. We can informally say that $R_{n}(t)$ is not only positive but also "coefficient positive".

For general functions on the interval $[0,\infty)$ there are many examples which are positive but not coefficient positive.
For example $f(t)=1-t+t^{2}$, $f(t)=e^{-t}$, $f(t)=(1+t)e^{-t}$ and $f(t)=3t+\sin t$ are such functions.
We shall now show that if $k\le\frac{n}{p}$ then $Q_{n,k}(e^{t})$ increasing on $(0,\infty)$ by showing that a certain associated function has positive Taylor series coefficients, where
\[
Q_{n,k}(x):=\frac{M^{k}(x)}{x^{n-k}}=\frac{M^{[k]}(x)}{x^{n}}.
\]
It is also convenient to set $S_{n,k}(t)=M_{n}^{[k]}(e^{t})$. Then we see that
\[
Q_{n,k}(e^{t})=e^{-nt}S_{n,k}(e^{t}).
\]
We further observe:
\[
S_{n,k}(t)=\sum_{r\ge0}A_{n,k}(r)\frac{t^{r}}{r!},
\]
where
\[
A_{n,k}(r)=\frac{1}{n} \sum_{m|n}\mu\left(\frac{n}{m}\right)(m)_{k}m^{r},
\]
and $(m)_{k}=m(m-1)\cdots(m-k+1)$, $0 \le k \le m$, and $(m)_{k}=0$ if $k > m$.
Therefore,
\[
(Q_{n,k}(e^{t}))^{\prime}=e^{-nt}\sum_{r\ge0}(A_{n,k}(r+1)-nA_{n,k}(r))\frac{t^{r}}{r!}
\]
\[
=e^{-nt}\sum_{r\ge0}C_{n,k}(r) \frac{t^r}{r!},
\]
where
\[
C_{n,k}(r):=A_{n,k}(r+1)-nA_{n,k}(r).
\]

Therefore, our goal is to prove that if $k\le\frac{n}{p}$ then $C_{n,k}(r)>0$. As explained above, if $k>n/p$ then 
\[
M^{[k]}(x)=\frac{(m)_{k}}{n}x^{n}
\]
and
\[
Q_{n,k}(x)=\frac{M^{[k]}(x)}{x^{n}}=\frac{(m)_{k}}{n}.
\]
Additionally,  $k > n$ then in fact $Q_{n,k}(x)=0$. 

We observe an interesting recursive formula for the numbers $A_{n,k}(r)$. (Recall that $J_{s}(j)$ entering our formula is the Jordan totient function.) For all integers $n\ge1$, $k\ge0$, $r\ge0$, and $s\ge0$, we have
\[
A_{n,k}(r+s)=\sum_{\text{lcm}(i,j)=n}\gcd(i,j)A_{i,k}(r)\frac{J_{s}(j)}{j}.
\]
This follows from:
\[
M_{n}^{[k]}(xy)=\sum_{\text{lcm}(i,j)=n}\gcd(i,j)M_{i}^{[k]}(x)M_{j}{(y)},
\]
see \cref{lem:general_product_rule}.
Indeed, applying the operator
\[
\left(x\frac{d}{dx}\right)^{r}\left(y\frac{d}{dy}\right)^{s}
\]
on both sides of the equation above and evaluating at $(x,y)=(1,1)$ we obtain the recursive formula above.

As a special case of this formula we obtain
\[
A_{n,k}(r+1)=\sum_{\text{lcm}(i,j)=n}\gcd(i,j)A_{i,k}(r)\frac{\varphi(j)}{j},
\]
where
\[
\varphi(j)=\sum_{d|j}\mu\left(\frac{j}{d}\right)d=J_{1}(j)
\]
is the Euler totient function.
Consider now only terms in the above summation which involve only terms corresponding to pairs $(n, j)$ with $j|n$:
\[
\sum_{\text{lcm}(n,j)=n}jA_{n,k}(r)\frac{\varphi(j)}{j} =nA_{n,k}(r).
\]
We use $\sum_{j|n}\varphi(j)=n$. Therefore,
\[
C_{n,k}(r)=\sum_{\substack{\text{lcm}(i,j)=n \\ i<n}}\gcd(i,j)A_{i,k}(r)\frac{\varphi(j)}{j}.
\]
These recursive formulas allow us to prove via induction on $k\ge1$ the following statements gradually:
\begin{itemize}
    \item[H1)] For every integer $n\ge1$ and every $r\ge0$ we have $A_{n,k}(r)=0$ if $n<k$.
 \item[H2)] For every integer $n\ge1$ and every integer $r\ge0$ we have $A_{n,k}(r) \ge 0$ if $n \ge k$.
 \item[H3)] For every integer $n\ge1$ and every integer $r\ge0$ we have $C_{n,k}(r)\ge0$.
 \item[H4)] Let $n\ge 2$, $p$ be the smallest prime divisor of $n$ and $r\ge0$.
Then $C_{n,k}(r)>0$ if $k\le\frac{n}{p}$ and $C_{n,k}(r)=0$ if $k>\frac{n}{p}$.
\end{itemize}
We shall omit the straightforward details of this induction proof (Alternatively, we can show that $A_{n,k}(r) \geq 0$ by a similar zeta estimate as given in \cref{lem:higher-deriviative-zeta}.)  Observe now that if $k\le\frac{n}{p}$ then from H4) we see that $C_{n,k}(r) >0$ for all $r\ge0$ and
\[
(Q_{n,k}(e^{t}))^{\prime}\ge e^{-nt}C_{n,k}(0)>0.
\]
Hence $Q_{n,k}(x)$, for $k\le\frac{n}{p}$, is increasing for $x \in [1,\infty)$.
However, we established much stronger results as we actually proved that $e^{nt}(Q_{n,k}(e^{t}))'$ is coefficient positive, as all $C_{n,k}(r) \ge 0$ for all $r\ge0$.
Moreover, as a bonus from our proof we obtained interesting arithmetic formulas for $A_{n,k}(r)$ and their recursive formula.

\section{Necklace polynomial as a function of $n$}

In this section, we fix a value of $x$ and study the first-order and second-order monotonicity of the sequence $\{ M_n(x)\}_{n=1}^{\infty}$.

\subsection{First-order monotonicity}
We begin by studying the ratio of consecutive Necklace polynomials.

\begin{thm}
For any fixed $x > 1$, the ratio of consecutive necklace polynomials asymptotically approaches $x$. That is,
\[
\lim_{n \to \infty} \frac{M_{n+1}(x)}{M_n(x)} = x.
\]
\end{thm}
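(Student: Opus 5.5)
We will show that $nM_n(x)/x^n \to 1$ as $n\to\infty$ for fixed $x>1$. The limit then follows by writing
\[
\frac{M_{n+1}(x)}{M_n(x)} = \frac{(n+1)M_{n+1}(x)/x^{n+1}}{nM_n(x)/x^n}\cdot \frac{n}{n+1}\cdot x ,
\]
in which the first factor tends to $1/1=1$ and the second tends to $1$. By part (1) of \cref{thm:main}, $M_n(x)>0$ for $n\ge 2$, so the ratio is well defined for all large $n$.

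\textbf{Isolating the main term.} From the definition,
\[
nM_n(x) = x^n + \sum_{d\mid n,\, d>1} \mu(d)\, x^{n/d}.
\]
Every divisor $d>1$ satisfies $d\ge 2$, so each exponent obeys $n/d \le \lfloor n/2\rfloor$. Moreover, the distinct divisors $d$ give distinct exponents $n/d$. Hence the error term is bounded, as in the proof of \cref{prop:geometric-estimate}, by a geometric series:
\[
\Bigl|\sum_{d\mid n,\, d>1}\mu(d)x^{n/d}\Bigr| \le \sum_{i=0}^{\lfloor n/2\rfloor} x^i = \frac{x^{\lfloor n/2\rfloor+1}-1}{x-1} \le \frac{x}{x-1}\, x^{n/2}.
\]
Dividing by $x^n$ gives
\[
\Bigl|\frac{nM_n(x)}{x^n}-1\Bigr| \le \frac{x}{x-1}\, x^{-n/2},
\]
which tends to $0$ because $x>1$.

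\textbf{Where the difficulty lies.} The only point needing care is the uniformity of this bound, so that the same estimate applies at both $n$ and $n+1$. Since the bound depends on $n$ only through $x^{-n/2}$, this is immediate. The denominator $nM_n(x)/x^n$ is eventually at least $1/2$, so the quotient of the two normalized terms converges to $1$. Combining this with the displayed factorization completes the proof.
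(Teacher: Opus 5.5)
Your proof is correct and follows essentially the same route as the paper: isolate the leading term $x^n$ in $nM_n(x)$, bound the remaining M\"obius terms by a geometric series of size $O(x^{n/2})$, and pass to the ratio of the normalized quantities. Your version is slightly tidier. You keep the constant $\frac{x}{x-1}$ instead of bounding it by $2$, which the paper does by invoking $x\ge 2$, so your argument covers the full range $x>1$ claimed in the statement.
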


\begin{proof}

Consider the equation
\[
M_n(x) = \frac{1}{n} \sum_{d \mid n} \mu(d) x^{n/d} = \frac{1}{n} \left( x^n + \sum_{d \mid n, d\ge 2} \mu(d) x^{n/d} \right) = \frac{1}{n} \left( x^n + E_n(x) \right),
\]
where \[E_n(x) := \sum_{d \mid n, d\ge 2} \mu(d) x^{n/d}. \]

Since the largest proper divisor of $n$ is at most $\lfloor n/2 \rfloor$, and $\mu(d) \in \{-1, 0, 1\}$, we can bound the absolute value of this error using a finite geometric series:
\begin{equation} \label{eq:bound}
|E_n(x)| \le \frac{1}{n} \sum_{j=1}^{\lfloor n/2 \rfloor} x^j = \frac{1}{n} \left( x \frac{x^{\lfloor n/2 \rfloor} - 1}{x - 1} \right).
\end{equation}
Because $x \ge 2$, the multiplier $\frac{x}{x-1}$ is bounded above by $2$. We can thus strictly bound the sum by $2x^{\lfloor n/2 \rfloor} \le 2x^{n/2}$. This explicitly provides the constant for our asymptotic bound, yielding:
\[
M_n(x) = \frac{x^n}{n} + O\left(\frac{x^{n/2}}{n}\right).
\]
We wish to evaluate the limit of the ratio $\frac{M_{n+1}(x)}{M_n(x)}$ as $n \to \infty$. Substituting the asymptotic expansions into the numerator and denominator yields:
\[
\lim_{n \to \infty} \frac{M_{n+1}(x)}{M_n(x)} = \lim_{n \to \infty} \frac{\frac{x^{n+1}}{n+1} \left( 1 + O\left( x^{-(n+1)/2} \right) \right)}{\frac{x^n}{n} \left( 1 + O\left( x^{-n/2} \right) \right)}.
\]

By rearranging the fraction to isolate the dominant terms from the asymptotic error terms, we obtain:
\[
\lim_{n \to \infty} \frac{M_{n+1}(x)}{M_n(x)} = \lim_{n \to \infty} \left( x \cdot \frac{n}{n+1} \right) \cdot \lim_{n \to \infty} \frac{1 + O\left( x^{-(n+1)/2} \right)}{1 + O\left( x^{-n/2} \right)}.
\]

Since $x > 1$ is a fixed constant, the terms $x^{-(n+1)/2}$ and $x^{-n/2}$ vanish as $n \to \infty$. Therefore, the limit of the rightmost fraction evaluates to exactly $1$. 

Evaluating the first limit, it is elementary that $\lim_{n \to \infty} \frac{n}{n+1} = 1$. Consequently, the entire expression simplifies to:
\[
\lim_{n \to \infty} \frac{M_{n+1}(x)}{M_n(x)} = x \cdot 1 \cdot 1 = x,
\]
completing the proof.
\end{proof}

\begin{rem} An field theoretic  interpretation is that when working over $\mathbb{F}_q$, the ratio of degree $n+1$ to degree $n$ polynomials coverges to $q$. Or, equivalently, the number of irreducible polynomials of degree $n+1$ is roughly $q$ times the number of irreducible polynomials of degree $n$ for sufficiently large $n$. Combinatorially, it means that adding one more bead to the necklace multiplies the total number of primitive configurations roughly by the number of available colors.
\end{rem}

The following corollary is now immediate.

\begin{cor} Let $x >1$.  Then 
$M_{n+1}(x) > M_n(x)$ for all sufficiently large $n$.
\end{cor}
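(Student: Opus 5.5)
The corollary follows directly from the preceding theorem. Fix $x>1$. The theorem gives $\lim_{n\to\infty} M_{n+1}(x)/M_n(x) = x > 1$. Set $\varepsilon = (x-1)/2>0$. Then there is an $N$ such that for all $n\ge N$,
\[
\frac{M_{n+1}(x)}{M_n(x)} > x-\varepsilon = \frac{x+1}{2} > 1 .
\]

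To turn this ratio bound into $M_{n+1}(x)>M_n(x)$, we multiply through by $M_n(x)$, so we need $M_n(x)>0$. For $n\ge 2$ this is part (1) of \cref{thm:main}. For $n=1$ it is immediate, since $M_1(x)=x>0$. Therefore for every $n\ge \max(N,2)$ we get
\[
M_{n+1}(x) > \tfrac{x+1}{2}\,M_n(x) > M_n(x).
\]

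The only point needing care is the range of $x$. The proof of the limit theorem estimates the factor $\frac{x}{x-1}$ by $2$, and that bound assumes $x\ge 2$. For $1<x<2$ I would rerun the same argument with a constant depending on $x$. The error term satisfies $|E_n(x)| \le \frac{x}{x-1}\,x^{n/2}$, so
\[
M_n(x) = \frac{x^n}{n}\bigl(1+O_x(x^{-n/2})\bigr),
\]
and the implied constant depends only on the fixed $x$. The ratio of these expansions still tends to $x$, because $x^{-n/2}\to 0$ for any fixed $x>1$.

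This gives the limit, and hence the corollary, for all $x>1$. The argument is routine; the only potential gap is this dependence of the constant on $x$, which the explicit geometric-series bound handles.
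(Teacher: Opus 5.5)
Your proof is correct and follows the paper's argument: use the limit $M_{n+1}(x)/M_n(x)\to x>1$ to bound the ratio eventually below by a constant exceeding $1$, then multiply by $M_n(x)>0$ (part (1) of \cref{thm:main}). You also point out that the bound $\frac{x}{x-1}\le 2$ in the paper's proof of the limit theorem requires $x\ge 2$, and that for $1<x<2$ one can use the $x$-dependent constant $\frac{x}{x-1}$; this is a valid repair of a point the paper's proof does not address.
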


\begin{proof} From the previous result, we have 
 \[
\lim_{n \to \infty} \frac{M_{n+1}(x)}{M_n(x)} = x \cdot 1 \cdot 1 = x.
\]
Now chose an $\epsilon >0$ such that $1+ \epsilon < x$. This is possible because $x > 1$. 
For this choice of $\epsilon$ we know that there exists an integer $N$ such that 
\[
\frac{M_{n+1}(x)}{M_n(x)} > 1 +\epsilon \; \forall n \ne N.
\]
Sine $M_n(x) >0$ for $x >1$, multiplying the above inequality by $M_n(x)$ on both sides gives 
\[ M_{n+1}(x) > M_n(x) + M_n(x) \epsilon > M_n(x) \; \; \forall n \ge N.\]
This completes the proof.
\end{proof}

In what follows, we will strengthen the above result and give a complete classification of $(x,n)$ such that $M_{n+1}(x)>M_n(x)$. To do so, we start with a lemma about the upper bound and the lower bound of $M_n(x).$
\begin{lem} \label{lem:bound}
Suppose that $x \geq 2$ and $n \geq 2$. Then 
\[ 
\frac{x^{n} - 2x^{\lfloor n/2 \rfloor}}{n} <M_n(x) \leq \frac{x^n}{n} . \]
\end{lem}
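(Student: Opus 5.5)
The plan is to write $nM_n(x) = x^n + E_n(x)$ with
\[
E_n(x) := \sum_{d \mid n,\, d \ge 2} \mu(d)\, x^{n/d},
\]
as in the proof of the limit theorem above. Both inequalities then become statements about $E_n(x)$: the upper bound is $E_n(x) \le 0$, and the lower bound is $E_n(x) > -2x^{\lfloor n/2\rfloor}$. In both cases I will use only two facts. First, the exponents $n/d$ for distinct divisors $d \ge 2$ are distinct integers in $[1, \lfloor n/2 \rfloor]$. Second, $|\mu(d)| \le 1$. Since $x \ge 2$, a geometric-series estimate in the spirit of \cref{prop:geometric-estimate} will finish each bound.

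For the lower bound, I bound every term from below by $-x^{n/d}$. Because the exponents are distinct integers between $1$ and $m := \lfloor n/2 \rfloor$, this gives
\[
E_n(x) \ge -\sum_{j=1}^{m} x^j = -\frac{x(x^m - 1)}{x-1}.
\]
For $x \ge 2$ we have $\frac{x}{x-1} \le 2$, so $E_n(x) \ge -2(x^m - 1) > -2x^m$. Dividing by $n$ gives the strict lower bound.

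For the upper bound, let $p$ be the smallest prime divisor of $n$. The term $d = p$ contributes exactly $-x^{n/p}$, and this is the largest exponent appearing in $E_n$. Every other divisor $d \ge 2$ with $\mu(d) \neq 0$ and $d \neq p$ satisfies $d > p$, so its exponent $n/d$ is an integer with $0 \le n/d < n/p$. Allowing exponent $0$ is a harmless over-count; the exponents are distinct, so the remaining terms sum to at most
\[
\sum_{j=0}^{n/p - 1} x^j = \frac{x^{n/p} - 1}{x - 1} \le x^{n/p} - 1 \quad (x \ge 2).
\]
Hence $E_n(x) \le -x^{n/p} + x^{n/p} - 1 < 0$, and in particular $M_n(x) \le x^n/n$. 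The edge cases fit this argument: if $n$ is prime then $E_n = -1$, and if $n$ is a prime power then $E_n = -x^{n/p}$.

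The only real subtlety is the upper bound. Bounding $E_n$ crudely in absolute value is not enough there; I have to isolate the dominant negative term $\mu(p)x^{n/p}$ and show it outweighs all the remaining terms. As noted above, this reduces to the same geometric-series inequality $(x^a - 1)/(x-1) \le x^a - 1$ for $x \ge 2$.
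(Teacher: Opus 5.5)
Your proof is correct. The lower bound is essentially the paper's. The paper's main text writes $nM_n(x)=x^n-\sum_{d\mid n,\,d<n} dM_d(x)$ and uses $dM_d(x)\le x^d$, but it also remarks that the bound follows from \cref{eq:bound}. That is exactly your termwise estimate $\mu(d)x^{n/d}\ge -x^{n/d}$ followed by the same geometric series.

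The genuine difference is the upper bound. The paper gets it in one line from the M\"obius inversion $x^n=\sum_{d\mid n} dM_d(x)$ together with $M_d(x)\ge 0$ from \cref{thm:main}, discarding every term except $d=n$. This is shorter, has a clear counting meaning (the $nM_n(x)$ aperiodic words are among all $x^n$ words), and holds for all $x\ge 1$. However, it rests on the positivity theorem and its inductive or Jordan-totient proof. You instead isolate the dominant negative term $-x^{n/p}$, coming from the least prime $p\mid n$, and show via $(x^a-1)/(x-1)\le x^a-1$ that it outweighs all remaining terms. Your route is elementary and self-contained, so the lemma no longer depends on \cref{thm:main}. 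Your two bounds are also independent of each other, whereas the paper's lower bound uses its upper bound for proper divisors. The cost is that your method needs $x\ge 2$ in an essential way, which the lemma assumes anyway.

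One small slip: for $n$ prime, $E_n(x)=\mu(n)x=-x$, not $-1$; the value $-1$ is your upper bound in that case. This agrees with your prime-power remark and does not affect the argument.
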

\begin{proof}
Let us first discuss the upper bound. By the Mobius inversion, we have
\[
x^n = \sum_{d \mid n} d \cdot M_d(x).
\]
Since $M_d(x) \ge 0$ for all $d \geq 1$ (by \cref{thm:main}), we can isolate the $d=n$ term to obtain:
\[
n \cdot M_n(x) \le x^n \implies M_n(x) \le \frac{x^n}{n}.
\]
Let us now prove the lower bound.  Multiplying by $n+1$ on both sides of the formula for $M_{n}(x)$ gives: 
\[
nM_{n}(x) = x^{n} - \sum_{\substack{d \mid n \\ d < n}} d \cdot M_d(x).
\]
Because $d \cdot M_d(x) \le x^d$, we can bound the sum by replacing it with a sum over all integers up to the largest possible proper divisor of $n$, which is $\lfloor \frac{n}{2} \rfloor$:
\[
\sum_{\substack{d \mid n \\ d < n}} d \cdot M_d(x) \le \sum_{j=1}^{\lfloor n/2 \rfloor} x^j = \frac{x}{x-1} \left( x^{\lfloor n/2 \rfloor} - 1 \right).
\]
Since $x \ge 2$, the fraction $\frac{x}{x-1} \le 2$. Therefore, the sum of the lower-degree terms is strictly less than $2x^{\lfloor n/2 \rfloor}$. This gives us our strict lower bound:
\[
M_{n}(x) > \frac{x^{n} - 2x^{\lfloor n/2 \rfloor}}{n}.
\]
We remark that we can obtain the lower bound via \cref{eq:bound}. 
\end{proof}
We are now ready to prove the our classification of $(x,n)$ such that $M_{n+1}(x)>M_n(x).$

\begin{thm}  \label{thm:increasing-function-of-n}
    Let $x \geq 2$ be a real number. Then $M_{n+1}(x)>M_n(x)$ unless $n=1$ and $2 \le x \le 3$
\end{thm}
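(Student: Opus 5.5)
The plan is to use the two-sided estimate of \cref{lem:bound} for all but finitely many $n$, and to handle the small cases by hand.

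\textbf{The case $n=1$.} We compare $M_2(x)=\frac{x^2-x}{2}$ with $M_1(x)=x$. The inequality $M_2(x)>M_1(x)$ is equivalent to $x^2-3x>0$, that is, to $x>3$. This produces exactly the stated exception $2\le x\le 3$.

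\textbf{General $n\ge 2$.} By \cref{lem:bound} applied to $n+1\ge 3$ and to $n$, it suffices to prove
\[
\frac{x^{n+1}-2x^{\lfloor (n+1)/2\rfloor}}{n+1}\ \ge\ \frac{x^n}{n}.
\]
Cross-multiplying, this is equivalent to
\[
n x^{n+1}-(n+1)x^n \ \ge\ 2n\,x^{\lfloor (n+1)/2\rfloor}.
\]
The left side is $x^n(nx-n-1)$. Since $x\ge 2$, we have $nx-n-1\ge n-1$. Dividing by $x^{\lfloor (n+1)/2\rfloor}$, it is therefore enough to show
\[
(n-1)\,x^{\,n-\lfloor (n+1)/2\rfloor}\ \ge\ 2n,
\]
which holds in particular if $(n-1)2^{\lfloor n/2\rfloor}\ge 2n$.
\begin{itemize}
\item For $n=2$ this reads $2\ge 4$, which fails.
\item For $n=3$ it reads $4\ge 6$, which fails.
\item For $n=4$ it reads $12\ge 8$, which holds.
\item For $n\ge 4$ the left side grows exponentially while the right side grows linearly, so an easy induction gives the inequality for all $n\ge 4$.
\end{itemize}
Because the lower bound in \cref{lem:bound} is strict, the conclusion $M_{n+1}(x)>M_n(x)$ is strict for every $n\ge 4$.

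\textbf{Residual cases $n=2,3$.} Here I compute directly instead of using the crude bounds.
\begin{itemize}
\item For $n=2$: $M_3(x)=\frac{x^3-x}{3}$ and $M_2(x)=\frac{x^2-x}{2}$. The inequality $M_3(x)>M_2(x)$ becomes $2x^3-2x>3x^2-3x$, i.e.\ $x(2x^2-3x+1)>0$, i.e.\ $x(2x-1)(x-1)>0$. This is clearly true for $x\ge 2$.
\item For $n=3$: $M_4(x)=\frac{x^4-x^2}{4}$. The inequality $M_4(x)>M_3(x)$ becomes $3x^4-3x^2>4x^3-4x$, i.e.\ $x(x-1)(3x^2-x-4)>0$, and $3x^2-x-4=(3x-4)(x+1)>0$ for $x\ge 2$.
\end{itemize}

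\textbf{Main obstacle.} The only delicate point is the bookkeeping at the threshold where the general estimate first applies. The crude factor $\frac{x}{x-1}\le 2$ in \cref{lem:bound} makes the small cases $n=2,3$ fail, so they must be handled separately as above. I also need to check that the estimate at $n=4$ uses the strict lower bound with $\lfloor 5/2\rfloor=2$ correctly.
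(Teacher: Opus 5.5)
Your proposal is correct and follows the paper's proof essentially step for step: you compare the strict lower bound for $M_{n+1}(x)$ with the upper bound for $M_n(x)$ from \cref{lem:bound}, reduce to $(n-1)x^{\lfloor n/2\rfloor}\ge 2n$ (valid for $n\ge 4$), and settle $n=1,2,3$ by the same explicit factorizations. The only cosmetic difference is that you verify $(n-1)2^{\lfloor n/2\rfloor}\ge 2n$ by induction, whereas the paper observes directly that $2n/(n-1)\le 8/3<4\le x^{\lfloor n/2\rfloor}$.
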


\begin{proof}
We will carefully analyze and compare the upper bound for $M_n(x)$ against the lower bound for $M_{n+1}(x)$ as described in \cref{lem:bound}. Through these bounds, we resolve the general case for $n \ge 4$, and then verify the specific cases for $n=2$ and $n=3$ separately.
Let us consider these two bounds 
\[
\frac{x^{n+1} - 2x^{\lfloor (n+1)/2 \rfloor}}{n+1} \ge \frac{x^n}{n}.
\]
Cross-multiplying by $n(n+1)$ gives:
\begin{align*}
n x^{n+1} - 2n x^{\lfloor (n+1)/2 \rfloor} &\ge (n+1) x^n, \\
n x^{n+1} - (n+1) x^n &\ge 2n x^{\lfloor (n+1)/2 \rfloor}, \\
x^n (nx - n - 1) &\ge 2n x^{\lfloor (n+1)/2 \rfloor}.
\end{align*}
Because $x \ge 2$, we know that $(nx - n - 1) \ge (2n - n - 1) = n - 1$. Substituting this into the inequality, we see that it is enough to show that:
\[
x^n (n - 1) \ge 2n x^{\lfloor (n+1)/2 \rfloor}.
\]
Dividing both sides by $x^{\lfloor (n+1)/2 \rfloor}(n-1)$, and noting that $n - \lfloor \frac{n+1}{2} \rfloor = \lfloor \frac{n}{2} \rfloor$, we get:
\[
x^{\lfloor n/2 \rfloor} \ge \frac{2n}{n-1}.
\]
Now we evaluate this for $n \ge 4$:
\begin{itemize}
    \item RHS: The function $\frac{2n}{n-1}$ is strictly decreasing, and for $n \ge 4$, its maximum value is $\frac{8}{3}$.
    \item LHS: Since $x \ge 2$ and $n \ge 4$, the minimum value for $x^{\lfloor n/2 \rfloor}$ is $2^2 = 4$.
\end{itemize}
Since $4 > 8/3$, the inequality $x^{\lfloor n/2 \rfloor} \ge \frac{2n}{n-1}$ holds strictly for all $n \ge 4$ and $x \ge 2$. Thus, $M_{n+1}(x) > M_n(x)$ for $n \ge 4$.

We now resolve the $n=2$ and $n=3$. For $n=2$, we need to show $M_3(x) > M_2(x)$. Using the formulas for these polynomials, we have:
\[
\frac{x^3 - x}{3}  > \frac{x^2 - x}{2} \iff
2x^3 - 2x > 3x^2 - 3x \iff \]
\[
2x^3 - 3x^2 + x > 0 \iff
x(2x-1)(x-1) > 0.
\]
Since $x \ge 2$, this product is clearly strictly positive. Similarly, for $n=3$, we need to show $M_4(x) > M_3(x)$. Using the exact formulas, we have:
\[
\frac{x^4 - x^2}{4} > \frac{x^3 - x}{3} \iff
3x^4 - 3x^2 > 4x^3 - 4x \iff \]
\[3x^4 - 4x^3 - 3x^2 + 4x > 0 \iff
x(x^2 - 1)(3x - 4) > 0.
\]
Again, for $x \ge 2$, all are strictly positive, making the entire expression positive.

Finally, we are left with the case $n=1$. To this end, we determine when $M_2(x) > M_1(x)$. Substituting the exact formulas, we get 
\[
\frac{x^2 - x}{2} > x \iff
x^2 - x > 2x \iff
x^2 - 3x > 0 \iff
x(x - 3) > 0.
\]

For the product $x(x - 3)$ to be strictly positive, we must have $x - 3 > 0$, which simplifies to $x > 3$. Thus, $M_2(x) > M_1(x)$ holds true for all real $x > 3$.

Conversely, if $x$ belongs to $[2, 3]$, then $x-3 \le 0$ which makes the prod $x(x-3) \le 0$, and this forces $M_2(x) \le M_1(x)$. This establishes the claimed exceptions to our inequality. 
\end{proof}

\subsection{Second-order monotonicity}
Having established that the sequence $\{M_n(x)\}_{n=2}^\infty$ is strictly increasing for any $x \ge 2$,  it is natural to investigate its second-order growth profile. While this can be explored additively or multiplicatively, the fact that $M_n(x)$ grows exponentially makes its multiplicative behavior the more appropriate thing to study.

Because $M_n(x)$ is strictly increasing, the ratio of consecutive terms is always strictly greater than one:
$$ b_n(x) := \frac{M_{n+1}(x)}{M_n(x)} > 1 \quad \text{for all } n \ge 2. $$

A natural question is how the sequence $b_n(x)$ itself behaves. Specifically, we want to determine whether the ratio of consecutive terms of $b_n(x)$ is greater than or less than $1$. This second-order ratio measures whether the relative rate of exponential growth of $M_n(x)$is accelerating or decelerating. 


If the relative growth is accelerating ($\frac{b_n(x)}{b_{n-1}(x)} > 1$), the sequence is log-convex, meaning $M_n(x)^2 < M_{n-1}(x) M_{n+1}(x)$. Conversely, if the growth is decelerating ($\frac{b_n(x)}{b_{n-1}(x)} < 1$), the sequence is log-concave, meaning $M_n(x)^2 > M_{n-1}(x) M_{n+1}(x)$.

We will now show that the sequence $\{M_n(x)\}$ is eventually a log-convex sequence.

 \begin{thm}
 For each fixed $x \ge 2$, there exists an integer $N_x$ such that the sequence $\{ M_n(x) \}$ is log-convex for all $n \ge N_x$.
Furthermore, if $x \ge 8$, we can take $N_x=2$.

\end{thm}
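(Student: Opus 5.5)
The plan is to work with the normalized quantities $a_m(x) := mM_m(x)$, which satisfy $a_m(x)=x^m-\sum_{d\mid m,\,d<m} dM_d(x)$. By \cref{lem:bound}, for $x\ge 2$ and $m\ge 2$ we can write $a_m(x)=x^m(1-\delta_m)$ with
\[
0\le \delta_m < 2x^{\lfloor m/2\rfloor-m}=2x^{-\lceil m/2\rceil}.
\]
For $m=1$ we have $\delta_1=0$. Log-convexity at index $n$, namely $M_n^2<M_{n-1}M_{n+1}$, is equivalent to
\[
\frac{a_n^2}{a_{n-1}a_{n+1}}<\frac{n^2}{(n-1)(n+1)}=1+\frac{1}{n^2-1}.
\]

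The first step bounds the left side. Since $a_n\le x^n$, the left side is at most $\bigl((1-\delta_{n-1})(1-\delta_{n+1})\bigr)^{-1}$. It is therefore enough to show
\[
(1-\delta_{n-1})(1-\delta_{n+1})> 1-\tfrac{1}{n^2}.
\]
Since $(1-a)(1-b)\ge 1-a-b$, this in turn follows from the sufficient condition
\begin{equation*}
2x^{-\lceil (n-1)/2\rceil}+2x^{-\lceil (n+1)/2\rceil}<\frac{1}{n^2}.
\end{equation*}
For fixed $x\ge 2$ the left side decays exponentially in $n$ while the right side decays only polynomially. Hence the condition holds for all $n\ge N_x$, which gives the first claim.

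For the uniform statement I will take $x\ge 8$. The left side of the sufficient condition is decreasing in $x$, so it suffices to check it at $x=8$ and then use monotonicity in $n$.
\begin{itemize}
\item At $n=4$ the left side is $2/64+2/512\approx 0.035<1/16$.
\item At $n=5$ it is again $\approx 0.035<1/25$.
\item At $n=6$ it is $2/512+2/4096<1/36$.
\end{itemize}
From $n=6$ on, the left side shrinks by a factor $8$ every two steps, while $n^2$ grows by a factor of at most $(8/6)^2<8$ over the same two steps. So the inequality propagates by an easy induction in steps of two.

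The case $n=3$, i.e. $M_3^2<M_2M_4$, is the real obstacle: the crude bounds are too weak there, and this case is exactly where the threshold $8$ appears. Here I will compute exactly. The inequality reads
\[
\frac{x^2(x-1)^2(x+1)^2}{9}<\frac{x(x-1)}{2}\cdot\frac{x^2(x-1)(x+1)}{4},
\]
and after cancelling positive factors for $x>1$ it becomes $8(x+1)<9x$, i.e. $x>8$. This simultaneously proves the base case and shows that $8$ is sharp. At $x=8$ itself equality holds, so there the statement must be read with non-strict log-convexity (or with $x>8$, as in the introduction). I would add a remark to that effect.

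If $N_x=2$ is meant to include the inequality $M_2^2<M_1M_3$, I will check it exactly as well. That inequality is $(x^2-x)^2/4<x(x^3-x)/3$, i.e. $3(x-1)<4(x+1)$, which always holds for $x>1$.
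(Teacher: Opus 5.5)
Your proof is correct. Its overall architecture matches the paper's: a leading-term-versus-error estimate gives the eventual statement by exponential-versus-polynomial growth, and small $n$ in the uniform statement are handled by exact computation. Your key estimate, however, is organized differently and is sharper.

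\textbf{The paper's route.} It writes $M_k=(x^k+E_k)/k$, expands $\Delta_n=M_{n-1}M_{n+1}-M_n^2$, and bounds all five cross-terms of the remainder in absolute value. This gives
$\Delta_n\ge \frac{x^{(3n+1)/2}}{n^2-1}\bigl(\frac{x^{(n-1)/2}}{n^2}-16\bigr)$.
At $x=8$ this only works for $n\ge 8$, so the paper has to verify $\Delta_2,\dots,\Delta_7$ through explicit factorizations.

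\textbf{Your route.} You normalize multiplicatively and use the one-sided bound $nM_n\le x^n$ from \cref{lem:bound} for the middle term. Its error can therefore only help, and only the deficits $\delta_{n\pm1}$ of the outer terms need to be controlled. Your sufficient condition $2x^{-\lceil (n-1)/2\rceil}+2x^{-\lceil (n+1)/2\rceil}<1/n^2$ already holds at $x=8$ for $n\ge 4$. So only $n=2,3$ need exact treatment, and your $n=3$ computation gives both the base case and the sharpness of $8$.

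\textbf{The case $x=8$.} You are right that equality $\Delta_3(8)=0$ holds there. Strictly speaking, the uniform claim is for $x>8$, which is all the paper's own treatment of $\Delta_3$ and its closing remark actually establish.

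\textbf{What each approach buys.} Yours needs far less case-checking and a weaker threshold. The paper's yields an explicit quantitative lower bound for $\Delta_n$ itself, not just its sign.

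\textbf{One small slip.} The two-step induction should be anchored at $n=4$ and $n=5$, one base case per parity. The step then uses $(n+2)^2/n^2\le 9/4<8$ for $n\ge 4$. The factor $(8/6)^2$ you quote does not cover the step from $5$ to $7$, since $(7/5)^2>(8/6)^2$, although the conclusion is unaffected.
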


\begin{proof}
For any integer $k \ge 1$, we can separate the highest-degree term of the necklace polynomial from its lower-degree terms. We write:
\[
M_k(x) = \frac{1}{k} \sum_{d|k} \mu(d) x^{k/d} = \frac{1}{k} \left( x^k + E_k(x) \right).
\]
where $E_k(x) = \sum_{\substack{d|k \\ d \ge 2}} \mu(d) x^{k/d}$. 

Note that the largest proper divisor of $k$ is at most $\lfloor k/2 \rfloor$. Therefore, the degree of the polynomial $E_k(x)$ is at most $\lfloor k/2 \rfloor$.

As established previously, for $x \ge 2$, we can bound the absolute value of $E_k(x)$ using a geometric series:
\[
|E_k(x)| \le \sum_{j=1}^{\lfloor k/2 \rfloor} x^j = x \frac{x^{\lfloor k/2 \rfloor} - 1}{x - 1} \le 2 x^{\lfloor k/2 \rfloor}.
\]

We want to examine the sign of  $\Delta_n(x) = M_{n-1}(x)M_{n+1}(x) - M_n(x)^2$. Substituting our expressions for $M_k(x)$ into $\Delta_n(x)$, we get:

\[
\Delta_n(x) = \left( \frac{x^{n-1} + E_{n-1}(x)}{n-1} \right) \left( \frac{x^{n+1} + E_{n+1}(x)}{n+1} \right) - \left( \frac{x^n + E_n(x)}{n} \right)^2.
\]

Expanding both products yields:
\begin{align*}
M_{n-1}(x)M_{n+1}(x) &= \frac{x^{2n}}{n^2-1} + \frac{x^{n-1}E_{n+1}(x) + x^{n+1}E_{n-1}(x) + E_{n-1}(x)E_{n+1}(x)}{n^2-1} \\
M_n(x)^2 &= \frac{x^{2n}}{n^2} + \frac{2x^n E_n(x) + E_n(x)^2}{n^2}.
\end{align*}

Subtracting the second equation from the first, we can isolate the leading $x^{2n}$ terms:
\[
\Delta_n(x) = \left( \frac{1}{n^2-1} - \frac{1}{n^2} \right) x^{2n} + R_n(x) = \frac{1}{n^2(n^2-1)} x^{2n} + R_n(x).
\]
where $R_n(x)$ is the remainder polynomial comprising all cross-terms:
\[
R_n(x) = \frac{x^{n-1}E_{n+1}(x) + x^{n+1}E_{n-1}(x) + E_{n-1}(x)E_{n+1}(x)}{n^2-1} - \frac{2x^n E_n(x) + E_n(x)^2}{n^2}.
\]

To prove $\Delta_n(x) > 0$, we establish a strict upper bound on $|R_n(x)|$. Using $|E_k(x)| \le 2x^{\lfloor k/2 \rfloor}$, we bound the absolute value of each of the five components of $R_n(x)$. Noting that $x \ge 2$ implies $x^a \le x^b$ whenever $a \le b$, we can uniformly bound every term by the maximal exponent $\frac{3n+1}{2}$ and the denominator $n^2-1$:

\begin{eqnarray*}
   \left| \frac{x^{n-1}E_{n+1}(x)}{n^2-1} \right| & \le &\frac{2 x^{n-1 + \lfloor \frac{n+1}{2} \rfloor}}{n^2-1} \le \frac{2 x^{\frac{3n-1}{2}}}{n^2-1} \le \frac{2 x^{\frac{3n+1}{2}}}{n^2-1} \\
    \left| \frac{x^{n+1}E_{n-1}(x)}{n^2-1} \right| & \le & \frac{2 x^{n+1 + \lfloor \frac{n-1}{2} \rfloor}}{n^2-1} \le \frac{2 x^{\frac{3n+1}{2}}}{n^2-1} \\
    \left| \frac{E_{n-1}(x)E_{n+1}(x)}{n^2-1} \right| & \le & \frac{4 x^{\lfloor \frac{n-1}{2} \rfloor + \lfloor \frac{n+1}{2} \rfloor}}{n^2-1} = \frac{4 x^n}{n^2-1} \le \frac{4 x^{\frac{3n+1}{2}}}{n^2-1}\\
    \left| \frac{2x^n E_n(x)}{n^2} \right|  & \le & \frac{4 x^{n + \lfloor \frac{n}{2} \rfloor}}{n^2} \le \frac{4 x^{\frac{3n}{2}}}{n^2-1} \le \frac{4 x^{\frac{3n+1}{2}}}{n^2-1} \\
    \left|\frac{E_n(x)^2}{n^2} \right| & \le & \frac{4 x^{2 \lfloor \frac{n}{2} \rfloor}}{n^2} \le \frac{4 x^n}{n^2-1} \le \frac{4 x^{\frac{3n+1}{2}}}{n^2-1}. 
\end{eqnarray*}

By the triangle inequality, summing these bounds gives an explicit estimate for $|R_n(x)|$:
\[
|R_n(x)| \le \frac{16 x^{\frac{3n+1}{2}}}{n^2-1}.
\]

We can now factor the expression for $\Delta_n(x)$ to compare the leading term against this bounded remainder:
\[
\Delta_n(x) \ge \frac{x^{2n}}{n^2(n^2-1)} - \frac{16 x^{\frac{3n+1}{2}}}{n^2-1} = \frac{x^{\frac{3n+1}{2}}}{n^2-1} \left( \frac{x^{\frac{n-1}{2}}}{n^2} - 16 \right).
\]

We use this inequality to prove both claims of the theorem. First, for any fixed $x \ge 2$, the exponential term $x^{\frac{n-1}{2}}$ grows much faster than the polynomial term $n^2$. Therefore, as $n \to \infty$, the ratio $\frac{x^{\frac{n-1}{2}}}{n^2} \to \infty$. Thus, there exists some $N_x$ such that for all $n \ge N_x$, the term in the parentheses is strictly positive, making $\Delta_n(x) > 0$. 

Second, we want to show that for $x \ge 8$, the sequence $\{ M_n(x)\}$ is log-convex for $n \ge 2$. We do this by showing that  $\Delta_n(x) > 0$  in this range. From the factored lower bound, this inequality holds whenever:
\[ x^{\frac{n-1}{2}} > 16n^2 \iff x > (16n^2)^{\frac{2}{n-1}}.
\]
Note that using Calculus, it can be shown that the sequence $a_n = (16n^2)^{\frac{2}{n-1}}$ is decreasing for $n \ge 2$.

Suppose $n\geq 8$.   
We note that $x\geq 8> 2^{20/7} = a_8 \geq a_n$, for all $n\geq 8$. Now for $2\leq n\leq 7$, we can directly check that $\Delta_n(x)>0$ for $x> 8$ from the following factorizations. 
\begin{itemize}
\item $\Delta_2(x)=\dfrac{1}{12}x^2(x-1)(x+7)>0$ for $x\geq 8$.
\item $\Delta_3(x)=\dfrac{1}{72}x^2(x-1)(x+1)(x-8)>0$ for $x> 8$.
\item $\Delta_4(x)=\dfrac{1}{240}x^2(x-1)^2(x+1)^2(x^2+16)>0$ for $x> 8$.
\item $\Delta_5(x)=\dfrac{1}{600}x^2(x-1)^2(x+1)^2(x^4-23x^2-25x-24)>0$ for $x> 8$.
\item $\Delta_6(x)=\dfrac{1}{1260}x^2(x-1)^2(x+1)^2(x^6+2x^4+70x^3+37x^2+70x+1)>0$ for $x> 8$.
\item $\Delta_7(x)=\dfrac{1}{2352}x^2(x-1)^2(x+1)^2(x^8+2x^6-49x^5-95x^4-49x^3-96x^2-48)>0$ for $x> 8$.
\end{itemize}
\end{proof}

\begin{rem}
Note that $x=8$ is a root of $\Delta_3(x) = 0$. This shows that the lower bound of $x \ge 8$ in the statement of the above theorem cannot be improved. 

\end{rem}

\bibliographystyle{amsplain}
\bibliography{references.bib}
\end{document}